\DeclareMathOperator{\Hom}{Hom}
\DeclareMathOperator{\Gal}{Gal}
\DeclareMathOperator{\Tr}{Tr}
\DeclareMathOperator{\N}{N}
\def\vF{\mathbb{F}}
\def\vN{\mathbb{N}}
\def\vP{\mathbb{P}}
\def\cO{\mathcal{O}}
\theoremstyle{definition}
\newtheorem{definition}{Definition}[section]
\newtheorem{remark}[definition]{Remark}
\newtheorem{problem}[definition]{Problem}
\theoremstyle{plain}
\newtheorem{theorem}[definition]{Theorem}
\newtheorem{corollary}[definition]{Corollary}
\newtheorem{lemma}[definition]{Lemma}
\newtheorem{proposition}[definition]{Proposition}
\newtheorem*{theorem*}{Theorem}
\newtheorem*{claim}{Claim}
\author[A. Ferraguti]{Andrea Ferraguti}
\address{Max Planck Institute for Mathematics\\
Vivatsgasse 7\\
53111 Bonn, Germany\\
}
\email{ferra@mpim-bonn.mpg.de}
\author[G. Micheli]{Giacomo Micheli}
\address{Mathematical Institute\\
University of Oxford\\
Woodstock Rd\\ 
Oxford OX2 6GG, United Kingdom
}
\email{giacomo.micheli@maths.ox.ac.uk}
\title[Classification of permutation rational functions of degree three]{Full Classification of permutation rational functions and complete rational functions of degree three over finite fields}
\thanks{The first author was supported by the Swiss National Science Foundation grant number 168459. The second author was supported by the Swiss National Science Foundation grant number 171248.}
\subjclass[2010]{11T06; 11R32; 11R58; 11R45.}
\keywords{Permutation Polynomials; Finite Fields; Densities.}
\begin{document}

\begin{abstract}
Let $q$ be a prime power, $\vF_q$ be the finite field of order $q$ and $\vF_q(x)$ be the field of rational functions over $\vF_q$. In this paper we classify and count all rational functions $\varphi\in \vF_q(x)$ of degree 3 that induce a permutation of $\vP^1(\vF_q)$. As a consequence of our classification, we can show that there is no complete permutation rational function of degree $3$ unless $3\mid q$ and $\varphi$ is a polynomial.
\end{abstract}

\maketitle
\section{Introduction}

Let $q$ be a prime power and $\vF_q$ be the finite field of order $q$.
Writing permutation polynomial maps of $\vF_q$ (and their generalizations)
 is a matter of great interest in number theory and applied areas (see for example \cite{charpin2009does,charpin2008class,gao1997tests,konyagin2002enumerating, konyagin2006enumerating, laigle2007permutation,masuda2006number,rivest2001permutation,shparlinski1992deterministic,sun2005interleavers,xu2018constructions}).
The classification of permutation polynomials of degree $3$ has been longly known (see for example \cite[Table 7.1]{lidl1997finite}); the vast literature on the topic contains sparse results that, put together, can be used to describe exceptional functions of certain degrees, for example in terms of R\'edei functions. However, there is a lack of references that deal, in a compact and self-contained way, with the finite field theoretic framework. The goal of this paper is to provide, following existing ideas of Cohen and Reid for polynomials \cite{cohen1999permutation}, a systematic approach that can be used, in principle, to furnish an explicit description of permutation rational functions of any fixed degree over a finite field of arbitrary characteristic: we do not want to only classify pairs of geometric and arithmetic monodromy groups but we want to be able to list \emph{all} permutation rational functions of a given degree (and in turn give closed formulas for the number of permutation rational functions of given degree, as in Table \ref{tnumber_of_permutations}). We then show how to use this argument in order to list all rational functions of degree $3$, up to a suitable equivalence relation. This strategy has two immediate advantages: first, it yields a simple way to give an exact counting formula for the number of such functions; second, it allows to immediately classify complete permutation rational functions of degree three.

For applications, it is worth noticining that given a rational function $\varphi$ permuting $\vP^1(\vF_q)$, it is easy to construct a permutation of $\vF_q$. For example, one can use the method described in \cite{amadio2018full,guidi2018fractional}: if $a\in \vF_q$ is such that $\varphi(a)=\infty$ and $\varphi(\infty)=b$, the fractional jump construction provides a new function $\overline{\varphi}$ such that $\overline{\varphi}(x)=\varphi(x)$ for any $x\neq a$, and $\overline{\varphi}(a)=b$.
 
The methods we use to list permutation rational functions are mostly number theoretical: the strategy is to characterize such rational functions in terms of Galois group properties of function field extensions associated to them (similarily to \cite{guralnick2007exceptional}); from these properties we deduce equations which we are able to solve and whose solutions parametrise exactly permutation rational functions of degree 3. As a corollary of our results we also obtain a classification of complete permutation rational functions.

If $\mathcal M\subseteq \vF_q(x)$ is the set of M\"obius transformations of $\vP^1$, we say that two rational functions $\varphi,\psi\in \vF_q(x)$ are \emph{equivalent} and we write $\varphi\sim\psi$ if there exist $m_1,m_2\in \mathcal M$ such that $m_1\circ\varphi\circ m_2=\psi$. It is easy to see that $\sim$ is an equivalence relation on the set of permutation rational functions of fixed degree.
Summarising the results of Sections \ref{sec:odd_char} and \ref{sec:even_char}
we obtain the classification in Table \ref{table_permutation_up_to_equivalence}, which takes into account the aforementioned equivalence. 

\renewcommand{\arraystretch}{3}
\begin{center}
\begin{table}
\begin{tabular}{ | c | c | c | c | }
\hline
	$q$ & \makecell{Nr. of\\ eq. classes} & Representatives & Equations\\
	\hline
	$q\equiv 1 \bmod 6$  & 1 &  $\displaystyle \left[\frac{x^3+ax}{bx^2+1}\right]$     & $ab=9$, $-b$ not a square\\
   \hline
  $q\equiv 2 \bmod 6 $ & 1 & $[x^3]$ & ---\\
  \hline 
    $q\equiv 3 \bmod 6 $ & 2 & $[x^3]$, $[x^3+ax]$ &  $-a$ not a square \\
  \hline 

  $q\equiv 4 \bmod 6$ & 1 & $\displaystyle \left[\frac{x^3+a_2x^2+a_1x}{x^2+x+b_0}\right]$ & \makecell{$\displaystyle \Tr_{\vF_q/\vF_2}(b_0)=1$,\\ $\displaystyle a_1=b_0+\frac{1}{b_0}$, $\displaystyle a_2=1+\frac{1}{b_0}$}\\
  \hline
$q\equiv 5 \bmod 6 $ & 1 & $[x^3]$ & ---\\
   \hline
\end{tabular}
\vspace{0.5cm}
\caption{We denoted by $[\varphi]$ the equivalence class of a rational function $\varphi \in \vF_q(x)$ under the relation $\sim$ induced by composing on the left and on the right with M\"obius transformations. The table above lists all the permutation rational functions of degree $3$ up to the equivalence $\sim$.} 
\label{table_permutation_up_to_equivalence}
\end{table}

\end{center}

\begin{center}
\begin{table}
\begin{tabular}{| c | c |}
\hline
$q$ & \makecell{Nr. of\\ permutation rational functions \\ of degree 3} \\
\hline
$q\equiv 0 \mod 3$ & $\frac{1}{2}(q^4+q^3+q^2+q)$\\
\hline
$q\equiv 1 \mod 3$ &  $\frac{1}{2}(q^4-q^2)$ \\
\hline
$q\equiv 2 \mod 3$ & $\frac{1}{2}(q^2+q)^2$\\
\hline
\end{tabular}
\vspace{0.5cm}
\caption{The table above shows an exact formula for the number of permutation rational functions of degree $3$ depending on the congruence class of $q$ modulo $3$.} \label{tnumber_of_permutations}
\end{table}

\end{center}

Table \ref{table_permutation_up_to_equivalence} shows that in the cases in which degree 3 permutation polynomials exist (i.e.\ $q\equiv 0,2\bmod 3$), every permutation rational function of degree 3 is equivalent to a permutation polynomial. On the other hand, permutation rational functions of degree 3 exist for every $q$. Moreover, since the equivalence relation we use preserves separability, Table \ref{table_permutation_up_to_equivalence} also shows that for every $q$ there exists essentially one separable permutation rational function of degree 3 (up to equivalence).

As a consequence of these results, notice that all permutation rational functions of degree 3  are exceptional (i.e. they permute infinitely many extension fields), as for any choice of coefficients verifying the conditions over a certain field $\vF_q$, there exist infinitely many extensions $\vF_{q^n}$ of $\vF_q$ such that the same conditions are verified.

For example, if $q$ is a prime congruent to $ 7 \bmod 12$ (i.e. $q\equiv 1 \bmod 6$ and $-1$ is not a square in $\vF_p$), all permutation rational functions of degree 3 are equivalent to $\varphi={\frac{x^3+x}{9x^2+1}}$. In addition, if $e$ is the order of $q$ modulo $12$ then $\varphi$ permutes (for example) $\vF_{p^{(e+1)^d}}$ for \emph{any} positive integer $d$.

The methods we explain in the paper easily allow to give an exact counting of the permutation rational functions of degree $3$ over a finite field which just depends on the congruence class of $q$ modulo $3$. The formulas are shown in Table
\ref{tnumber_of_permutations} and they address \cite[Problem 8.1.13]{mullen2013handbook} and \cite[Problem 14]{mullen1996open} for the case $n=3$ but in the more general framework of rational functions.

We now give a brief outline of the paper. Section \ref{sec:preliminaries} is devoted to a recap of the two main tools used, which are Chebotarev Density Theorem for function fields (Theorem \ref{chebotarev_density_theorem}) and a result connecting ramification and splitting of places in global function field extensions and their Galois closures (Lemma \ref{orbits}). In Section \ref{sec:perm_and_fixed_points} we assemble the preliminaries to produce, following ideas of Cohen \cite{cohen1999permutation}, a handy criterion (Theorem \ref{fixed_points}) to establish whether a rational function induces a bijection of $\vP^1(\vF_q)$. In Section \ref{sec:degree_three} the criterion is then applied to characterize rational functions of degree $3$ which are permutations  (Lemma \ref{discriminant}).
In Sections \ref{sec:odd_char} and \ref{sec:even_char} we use (3) and (4) of Lemma \ref{discriminant} to find equations for the coefficients of the permutation rational functions of degree $3$ (Theorem \ref{complete_classification}, Corollary \ref{cor:equivalence_odd_char}, Theorem \ref{char_2_class}, and Corollary \ref{cor:equivalence_char_2}).
Finally, in Section \ref{sec:ComRatDeg3} we apply our results to show that there is no complete permutation rational function of degree $3$ unless $3\mid q$ and $\phi$ is a polynomial.

\section{Preliminary results}\label{sec:preliminaries}
In what follows we will use notation and terminology of \cite{stichtenoth2009algebraic}. Let us recall a classical result from algebraic number theory.
\begin{lemma}\label{orbits}
Let $L:K$ be a finite separable extension of global function fields and let $M$ be its Galois closure with Galois group $G$. Let $P$ be a place of $K$ and $\mathcal Q$ be the set of places of $L$ lying above $P$.
Let $R$ be a place of $M$ lying above $P$. The following hold:
\begin{enumerate}
\item There is a natural bijection between $\mathcal Q$ and the set of orbits of $H\coloneqq\Hom_K(L,M)$ under the action of the decomposition group $D(R|P)=\{g\in G\,|\, g(R)=R\}$.
\item  Let $Q\in \mathcal Q$ and let $H_Q$ be the orbit of $D(R|P)$ corresponding to $Q$. Then $|H_Q|=e(Q|P)f(Q|P)$ where $e(Q|P)$ and $f(Q|P)$ are ramification index and relative degree, respectively. 

\item The orbit $H_Q$ partitions further under the action of the inertia group $I(R|P)$ into $f(Q|P)$ orbits of size $e(Q|P)$. 

\end{enumerate}
\end{lemma}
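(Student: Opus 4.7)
The plan is to exploit the $G$-action on $H=\Hom_K(L,M)$. Fixing the inclusion $\iota\colon L\hookrightarrow M$, the group $G$ acts transitively on $H$ by $g\cdot\sigma = g\circ\sigma$, the stabilizer of $\iota$ being $\Gal(M/L)$, so that $H$ is identified as a $G$-set with $G/\Gal(M/L)$. The map $\Phi\colon H\to \mathcal Q$ defined by $\sigma\mapsto \sigma^{-1}(R)$ is surjective because $G$ acts transitively on the places of $M$ above $P$. For part (1), I would show that $\Phi(\sigma_1)=\Phi(\sigma_2)$ if and only if the two places $\sigma_1^{-1}(R),\sigma_2^{-1}(R)$ of $M$ are $\Gal(M/L)$-conjugate, and then translate this via $H\cong G/\Gal(M/L)$ into the statement that $\sigma_1,\sigma_2$ lie in the same orbit of the left-multiplication action of $D(R|P)$ on $G/\Gal(M/L)$.

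For part (2), I would apply the orbit-stabilizer theorem to a coset $g\Gal(M/L)$ representing $H_Q$. Its stabilizer in $D(R|P)$ equals $D(R|P)\cap g\Gal(M/L)g^{-1}$, and conjugation by $g$ identifies this with $\Gal(M/L)\cap D(g^{-1}(R)\,|\,P)=D(g^{-1}(R)\,|\,Q)$. The order of this group is $e(g^{-1}(R)|Q)\,f(g^{-1}(R)|Q)$, which by $G$-invariance of ramification and residue degrees together with the tower formulas $e(g^{-1}(R)|P)=e(g^{-1}(R)|Q)\,e(Q|P)$ and the analogue for $f$ simplifies to $|D(R|P)|/(e(Q|P)f(Q|P))$. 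Orbit-stabilizer then yields $|H_Q|=e(Q|P)f(Q|P)$.

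For part (3), the same orbit-stabilizer argument applies with $I(R|P)$ in place of $D(R|P)$, using that $|I(R|P)|=e(R|P)$ since residue field extensions are automatically separable in this setting. After conjugation by $g$, the stabilizer in $I(R|P)$ becomes $I(g^{-1}(R)|Q)$, of order $e(g^{-1}(R)|Q)=e(R|P)/e(Q|P)$, so every $I(R|P)$-orbit inside $H_Q$ has size $e(Q|P)$, giving exactly $|H_Q|/e(Q|P)=f(Q|P)$ such orbits. The main technical care throughout is in the conjugation step: one has to track how intersections like $D(R|P)\cap g\Gal(M/L)g^{-1}$ (and the analogue with $I$) become decomposition or inertia groups of \emph{translated} places over an intermediate base, and then apply multiplicativity in towers to reduce the expressions to invariants of $Q|P$. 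All the required facts about decomposition and inertia groups are collected in \cite{stichtenoth2009algebraic}; the work is structural organization rather than new input.
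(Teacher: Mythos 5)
Your proof is correct, and it is precisely the classical double-coset/orbit--stabilizer argument: the paper does not prove Lemma \ref{orbits} itself but defers to the cited reference, where essentially this argument is given (identify $H\cong G/\Gal(M/L)$, send a coset $g\Gal(M/L)$ to the restriction of $g^{-1}(R)$ to $L$, and compute stabilizers in $D(R|P)$ and $I(R|P)$ by conjugating into $D(g^{-1}(R)|Q)$ and $I(g^{-1}(R)|Q)$ and using $|D|=ef$, $|I|=e$ together with multiplicativity in towers). Two cosmetic points to tidy when writing it up: the $\sigma_i^{-1}(R)$ are places of $L$, so for the $\Gal(M/L)$-conjugacy step you should first lift the $\sigma_i$ to elements $g_i\in G$ and compare the places $g_i^{-1}(R)$ of $M$; and in part (3), to get that \emph{every} $I(R|P)$-orbit inside $H_Q$ has size $e(Q|P)$, observe that any point of $H_Q$ admits a lift $g'\in G$ with $g'^{-1}(R)$ lying above $Q$ (equivalently, invoke normality of $I(R|P)$ in $D(R|P)$), so the stabilizer computation applies uniformly and not just at the chosen representative.
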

A proof of Lemma \ref{orbits} can be found for example in \cite{guralnick2007exceptional}.

Let now $F/\vF_q$ be a global function field with full constant field $\vF_q$, and let $M/F$ be a finite Galois extension.
Let $k=\overline{\vF}_q\cap M$ be the field of constants of $M$.
Notice that $\Gal(kF:F)\cong \Gal(k:\mathbb F_q)\cong C_{[k:\vF_q]}$.
Let $\gamma\in G\coloneqq\Gal(M:F)$ be such that $\phi\coloneqq\gamma_{|k}$ is the Frobenius automorphism of $k:\vF_q$ (i.e.\ the map $a\mapsto a^q$).
Let $R\subseteq M$ be a place of $M$ lying above a place $P$ of degree $1$. Let $\cO_R$ and $\cO_P$ be the valuation rings of $P$ and $R$, respectively.
Elements of $D(R|P)$ naturally restrict to well-defined automorphisms of $\cO_R/R$, giving a surjective map
\[\pi_R: D(R|P)\twoheadrightarrow \Gal(\cO_R/R:\cO_P/P).\]
We set
\[D_{\phi}(R|P)\coloneqq\pi_R^{-1}(\phi)\]
and let $N=\Gal(M:kF)$. For any element $\sigma\in N\gamma$ let us define 
\[w_P(\sigma)\coloneqq\frac{\# (D_\phi(R|P)\cap \Gamma_\sigma)}{\#\Gamma_\sigma \cdot \#D_\phi(R|P)},\]
where $\Gamma_\sigma$ is the conjugacy class of $\sigma$ in $G$. Notice that since $G/N$ is cyclic we have that $\Gamma_\sigma\subseteq N\gamma$.
\begin{theorem}[Chebotarev]\label{chebotarev_density_theorem}
Let $M,F,k,G,N$ be as above. Let $g_M$ be the genus of $M$.
Let $\sigma\in N\gamma$. Then
\[\left| \sum_{P\in \mathcal P^1(F/\vF_q)} w_P(\sigma)- \frac{1}{\# N} (q+1)\right|\leq \frac{2}{\# N}g_M \sqrt q,\]
where $\mathcal P^1(F/\vF_q)$ is the set of places of degree $1$ of $F$.
\end{theorem}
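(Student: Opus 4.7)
The plan is to recognize the statement as the Chebotarev density theorem for function fields specialized to the coset $N\gamma$, proved via Artin $L$-functions and Weil's Riemann hypothesis. First I would interpret the weights $w_P(\sigma)$ concretely. For an unramified degree-$1$ place $P$ of $F$, Lemma~\ref{orbits}(3) forces $D_\phi(R|P)$ to be a singleton $\{\operatorname{Frob}_{R|P}\}$, the arithmetic Frobenius; its $G$-conjugacy class $\operatorname{Frob}_P$ is independent of the choice of $R$, lies in $N\gamma$ (because the residue field of $P$ is $\vF_q$ and $\pi_R$ restricts to $\phi$ on $k$), and satisfies $w_P(\sigma)=\mathbf{1}[\operatorname{Frob}_P=\Gamma_\sigma]/\#\Gamma_\sigma$. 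The ramified places, finite in number and controlled via Riemann--Hurwitz, contribute a harmless $O(g_M/\#N)$ correction.

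Next I would apply character orthogonality to the class function $\mathbf{1}_{\Gamma_\sigma}$:
\[
\mathbf{1}[\tau\in\Gamma_\sigma]=\frac{\#\Gamma_\sigma}{\#G}\sum_{\chi\in\widehat G}\overline{\chi(\sigma)}\chi(\tau)\qquad(\tau\in G),
\]
so that
\[
\sum_{P\in\cP^1(F/\vF_q)} w_P(\sigma)=\frac{1}{\#G}\sum_{\chi\in\widehat G}\overline{\chi(\sigma)}\sum_{\deg P=1}\chi(\operatorname{Frob}_P)+(\text{ramified correction}).
\]
I would then split $\widehat G$ into characters trivial on $N$, which factor through $G/N\cong\Gal(k:\vF_q)$, and the rest. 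For $\chi$ trivial on $N$ and any degree-$1$ place $P$ of $F$ one has $\chi(\operatorname{Frob}_P)=\chi(\gamma)$ and $\overline{\chi(\sigma)}\chi(\gamma)=1$, so summing over the $[k:\vF_q]$ such characters produces the main term $\lvert\cP^1(F/\vF_q)\rvert/\#N=(q+1)/\#N+O(g_F\sqrt q/\#N)$ by the Hasse--Weil bound.

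For each $\chi$ non-trivial on $N$, the Artin $L$-function $L(s,\chi,M/F)$ is a polynomial in $q^{-s}$ whose degree is controlled by $g_M$ and the ramification of $M/F$, and Weil's Riemann hypothesis forces all its inverse roots to have absolute value $\sqrt q$. Extracting the coefficient of $q^{-s}$ in $-\log L(s,\chi)$ bounds $\bigl|\sum_{\deg P=1}\chi(\operatorname{Frob}_P)\bigr|$ by a constant multiple of $\sqrt q$, and summing produces the claimed error. The anticipated main obstacle is the final bookkeeping: recovering exactly the constant $2$ in $\tfrac{2g_M\sqrt q}{\#N}$ rather than a weaker $O(g_M\sqrt q/\#N)$. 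This requires the factorization $\zeta_M(s)=\prod_\chi L(s,\chi)^{\chi(1)}$, the functional equation, and the Riemann--Hurwitz identity relating the degrees of the individual $L$-functions to $g_M$---a classical but delicate accounting.
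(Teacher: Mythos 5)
The paper does not actually prove this theorem: it records that the inequality follows from the Riemann Hypothesis for curves applied to ``a certain curve associated to $\sigma$'' and refers to Kosters for the details. That argument is geometric and avoids characters altogether: since $\sigma$ restricts to the Frobenius of $k/\vF_q$, the fixed field $M^{\langle\sigma\rangle}$ has full constant field $\vF_q$; its degree-one places are counted by Hasse--Weil, they are matched against the weighted quantities $w_P(\sigma)$ via Lemma~\ref{orbits}, and its genus is compared to $g_M$ by Riemann--Hurwitz. Your route through orthogonality of characters and Artin $L$-functions is therefore genuinely different; it is the classical strategy (Fried--Jarden, Murty--Scherk) and can in principle deliver exactly this bound, since the degree count $\sum_\chi\chi(1)\deg L(\cdot,\chi)$, the $[k:\vF_q]$-fold reindexing coming from the fact that the constant field of $M$ is $k$, and $\#G=[k:\vF_q]\,\#N$ conspire to turn the $1/\#G$ prefactor from orthogonality into the stated $\tfrac{2}{\#N}g_M\sqrt q$.

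Two points in your sketch fall short of the statement as written, and they are exactly where its content lies. First, the ramified degree-one places cannot be shunted into ``a harmless $O(g_M/\#N)$ correction'': they appear in the sum $\sum_P w_P(\sigma)$ with their specific weights, and the right-hand side is exactly $\tfrac{2}{\#N}g_M\sqrt q$ with no additive slack and no hypothesis that $q$ is large, so even a bounded number of discarded or crudely bounded places changes the statement being proved. In an $L$-function proof they must enter through the inertia-averaged local factors -- extracting the $q^{-s}$-coefficient of the logarithmic derivative at a ramified $P$ is precisely what produces $w_P(\sigma)$ -- and their effect on the degrees of the $L$-functions then has to cancel against the conductor--discriminant/Riemann--Hurwitz count giving $2g_M$. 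Second, the constant $\tfrac{2g_M}{\#N}$ is deferred to ``delicate accounting,'' but that accounting is the theorem: one must use $\lvert\chi(\sigma)\rvert\le\chi(1)$, the factorization of $\zeta_M$, the fact that the characters trivial on $N$ reproduce $\zeta_F$ up to twist (so their $2g_F$ inverse roots of absolute value $\sqrt q$ exactly absorb the Hasse--Weil error of the main term), and the $[k:\vF_q]$-fold relation between inverse roots of $\zeta_M$ in the variable $q^{-[k:\vF_q]s}$ and in $q^{-s}$. Done loosely, this yields a bound of the same flavour but with a different error term, which is what the standard references obtain. So either carry this bookkeeping out in full, or argue as in the reference the paper cites, where the clean constant falls out directly from Hasse--Weil applied to $M^{\langle\sigma\rangle}$.
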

The proof of this inequality comes essentially from an application of the Riemann Hypothesis for curves over a finite field to a certain curve associated to $\sigma$. See \cite{kosters2017short} for details.
\begin{remark}
Let $\sigma\in N\gamma$.
Theorem \ref{chebotarev_density_theorem} can be restated as follows: \[\left| \sum_{\text{ramified }  P\in \mathcal P^1(F/\vF_q)}w_P(\sigma)\quad + 
\sum_{\substack{\text{unramified } P\in \mathcal P^1(F/\vF_q): \\ \text{ $\sigma$ is a Frobenius at $P$}}} \frac{1}{\#\Gamma_\sigma}- \frac{1}{\# N} (q+1)\right|\leq \frac{2}{\# N}g_M \sqrt q.\]

By multiplying by $\#\Gamma_\sigma$ and ignoring the ramified places (as $w_P(\sigma)\in [0,1]$) one gets the usual estimate for the number of $P\in \mathcal P^1(F/\vF_q)$ such that $\sigma$ is a Frobenius at $P$, which in $q$ is asymptotic to $\displaystyle \frac{\# \Gamma_\sigma}{\# N} (q+1)$.
\end{remark}

\section{Permutations and fixed points: a toolkit for constructing permutations over \texorpdfstring{$\vF_q$}{}}\label{sec:perm_and_fixed_points}
In this section we provide the main tool which we will use in the rest of the paper (see also \cite[Theorem 9.7.24]{mullen2013handbook} for an equivalent condition). These ideas  date back to Cohen and Fried for polynomials \cite{cohen1999permutation}; although the proof for rational functions follows the same lines, we report it here for the sake of completeness: the purpose of this section is to be self-contained and allow a non-expert to assimilate the method quickly from a compact source.

\begin{definition}
 A rational function $\varphi\in \vF_q(x)$ is called a \emph{permutation rational function} if it induces a permutation of $\vP^1(\vF_q)$.
\end{definition}

\begin{theorem}\label{fixed_points}
Let $p$ be a prime and $q=p^\ell$ for some positive integer $\ell$.
Let $f,g$ be coprime polynomials in $\vF_q[x]$ not both lying in $\vF_q[x^p]$ and let $\varphi\coloneqq f/g$. Let $M$ be the splitting field of $f-tg$ over $\vF_q(t)$, $k=M\cap \overline{\vF}_q$ be the field of constants of $M$, $G=\Gal(M:\vF_q(t))$ and $N=\Gal(M:k(t))$.  Let $\gamma\in G$ be a Frobenius for the field of constants extension (i.e.\ such that $\gamma_{|k}=a\mapsto a^q$). Let $\alpha$ be a root of $f-tg$ and let $H\coloneqq\Hom_{\vF_q(t)}(\vF_q(\alpha), M)$. Then there exists a constant C, depending only on the degrees of $f$ and $g$, such that if $q\geq C$, then $\varphi$ is permutation if and only if all elements in $N\gamma$ have exactly one fixed point when acting on $H$.
\end{theorem}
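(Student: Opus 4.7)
My plan is to rephrase the permutation condition in terms of places of $L\coloneqq\vF_q(\alpha)$ over $F\coloneqq\vF_q(t)$ and then combine a double-counting identity with Chebotarev. The assumption $\gcd(f,g)=1$ makes $f(Y)-tg(Y)\in\vF_q(t)[Y]$ irreducible, so $L$ is a degree-$n$ extension of $F$ isomorphic to $\vF_q(x)$ with $t\mapsto\varphi(x)$; in particular the degree-one places of both $F$ and $L$ correspond to $\vP^1(\vF_q)$. A degree-one place $Q$ of $L$ lies above a degree-one place $P$ of $F$ precisely when $\varphi$ maps the corresponding points, so $\varphi$ permutes $\vP^1(\vF_q)$ if and only if $N(P)\coloneqq\#\{Q\in\mathcal{P}^1(L):Q\mid P\}$ equals $1$ for every degree-one $P$.

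For the ``if'' direction, fix a degree-one place $P$, a place $R$ of $M$ above $P$, and set $D\coloneqq D(R|P)$, $I\coloneqq I(R|P)$. Two facts are central: first, $D_\phi(R|P)\subseteq N\gamma$, because any $\sigma\in D_\phi$ induces the $q$-Frobenius on $\cO_R/R\supseteq k$, forcing $\sigma|_k=\gamma|_k$; second, $D_\phi$ is invariant under $D$-conjugation, because $D/I$ is cyclic (hence $\pi_R(g\sigma g^{-1})=\pi_R(\sigma)$). I then double-count:
\[
\sum_{\sigma\in D_\phi}\#\mathrm{Fix}_H(\sigma)\;=\;\sum_{Q\mid P}\sum_{h\in H_Q}\#\bigl(D_\phi\cap\mathrm{Stab}_D(h)\bigr),
\]
using the $D$-orbit decomposition $H=\bigsqcup_{Q\mid P}H_Q$ of Lemma~\ref{orbits}. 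Conjugation stability forces the inner summand to be constant on each orbit. The projection $D_h\to D/I$ has image of index $f(Q|P)$ (computing orders via Lemma~\ref{orbits}(3)), so the inner summand equals $e(R|Q)$ when $f(Q|P)=1$---since then the Frobenius coset meets $D_h$ in a single coset of $I_h$ of size $e(R|Q)$---and vanishes otherwise. Hence the right-hand side equals $N(P)\cdot|I|$, while the hypothesis makes the left-hand side $|D_\phi|=|I|$, giving $N(P)=1$ for every degree-one $P$, without any largeness of $q$.

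For the ``only if'' direction, at unramified $P$ the identity reduces to $N(P)=\#\mathrm{Fix}_H(\sigma_P)$ for any $\sigma_P\in D_\phi$. If $\varphi$ is a permutation this forces $\#\mathrm{Fix}_H(\sigma_P)=1$ at every unramified degree-one $P$, and Theorem~\ref{chebotarev_density_theorem} then ensures, for $q$ larger than a constant depending only on $g_M$, $|N|$, and the number of ramified places---hence only on $\deg f$ and $\deg g$---that every conjugacy class $\Gamma\subseteq N\gamma$ contains the Frobenius of some such $P$. Since the fixed-point count is a class function, the conclusion propagates to all of $N\gamma$.

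The main obstacle is the backward direction, specifically the identity $\sum_{\sigma\in D_\phi}\#\mathrm{Fix}_H(\sigma)=N(P)\cdot|I|$: the non-obvious ingredient is the $D$-conjugation-stability of $D_\phi$ (despite it being merely a coset of $I$), which lets us derive a sharp equality at every degree-one $P$ and sidestep the ramified places that would otherwise obstruct a purely Chebotarev-based argument.
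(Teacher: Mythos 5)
Your argument is correct, and its ``only if'' half is the paper's argument: Chebotarev (Theorem \ref{chebotarev_density_theorem}), with $g_M$, $\#N$ and the number of ramified places bounded in terms of the degrees, produces for each class in $N\gamma$ an unramified degree-one place where it is the Frobenius class, and Lemma \ref{orbits} converts the unique degree-one place of $\vF_q(\alpha)$ above it into a unique fixed point. Where you genuinely diverge is the converse. The paper proceeds by contradiction: it takes a degree-one place $P$ outside the image of $\varphi$, so every $Q\mid P$ has $f(Q|P)\geq 2$, chooses a Frobenius $\sigma\in D(R|P)\cap N\gamma$, and uses $D(R|P)=\bigcup_i I(R|P)\sigma^i$ together with Lemma \ref{orbits}(3) to show the $D(R|P)$-orbit of a fixed point of $\sigma$ has size exactly $e(Q'|P)$, contradicting the lower bound $2e(Q'|P)$. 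You instead prove the unconditional identity $\sum_{\sigma\in D_\phi(R|P)}\#\mathrm{Fix}_H(\sigma)=\nu(P)\cdot\#I(R|P)$ at \emph{every} degree-one place $P$, where $\nu(P)$ is the number of degree-one places of $\vF_q(\alpha)$ above $P$; your evaluation of $\#\bigl(D_\phi\cap\mathrm{Stab}_D(h)\bigr)$ via Lemma \ref{orbits}(2)--(3), the cyclicity of $D/I$, and the fact that $D_\phi$ is a conjugation-stable coset of inertia contained in $N\gamma$ all check out, and the hypothesis then gives $\nu(P)=1$ for all $P$ directly. The underlying mechanism (inertia versus the Frobenius coset, Lemma \ref{orbits}(3)) is the same, but your packaging buys a few things: it handles ramified places, $P_\infty$ and the value $\varphi(\infty)$ completely uniformly, whereas the paper's phrasing through ``$f-t_0g$ has exactly one linear factor'' needs minor bookkeeping when $t_0=\varphi(\infty)$; it yields both directions from one formula (at unramified $P$ it reads $\nu(P)=\#\mathrm{Fix}_H(\mathrm{Frob}_P)$); and it makes Remark \ref{suff} --- sufficiency independent of the size of $q$ --- immediate. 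The paper's contradiction argument is shorter but gives only the qualitative conclusion rather than this exact count.
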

\begin{remark}
Notice that $H$ is in natural correspondence with the roots of $f(x)-t$ over $\overline{\vF_q(t)}$.
\end{remark}
\begin{proof}[Proof of Theorem \ref{fixed_points}]
Suppose first that $\varphi$ is permutation and let $\sigma\in N\gamma$. For convenience, let us identify the places of degree $1$ of $\vF_q(t)$ with $\vF_q\cup P_\infty$ where $P_\infty$ is the place at infinity of $\vF_q(t)$. 
Now consider the quantity
\begin{equation}\label{choice_of_C}
S\coloneqq q+1-2g_M\sqrt q \;- \sum_{\text{ramified }  P\in \mathcal P^1(\vF_q(t)/\vF_q)}w_P(\sigma),
\end{equation}
where $g_M$ and $w_P(\sigma)$ are defined as in Section \ref{sec:preliminaries}. Since the genus $g_M$ and the number of ramified places can be bounded by a constant which depends only on the degrees of $f$ and $g$ (and is therefore independent of $q$), there exists a constant $C$ such that if $q\geq C$, then $S\geq 2$. On the other hand, it follows From Theorem \ref{chebotarev_density_theorem} that if $S\geq 2$ then $\vF_q(t)$ has an unramified place of degree 1 different from $P_{\infty}$, which we can identify with some $t_0\in \vF_q$, for which $\sigma$ is a Frobenius. Let now $q\geq C$ and $D=\langle \sigma \rangle\subseteq G$. Since $t_0$ is unramified, $D$ is the decomposition group of a place of $M$ lying above $t_0$. The factorization pattern of $f-t_0g$ over $\vF_q[x]$ corresponds to the orbits of the action of $D$ on $H$ by Lemma \ref{orbits}. Since $\varphi$ is a permutation, $f-t_0g$ must have exactly one linear factor. Since $\sigma$ generates $D$ it follows that it has
  exactly one fixed point.

Conversely, suppose that all elements in $N\gamma$ have exactly one fixed point, and by contradiction assume that $\varphi$ is not a permutation of 
$\vP^1(\vF_q)$. Let $P$ be a place of degree one of $\vF_q(t)$ such that $P$ is outside the image of $\varphi$. Then, for any place $Q$ of $\vF_q(\alpha)$ lying above $P$ we have $f(Q|P)\geq 2$. Let now $R$ be a place of $M$ lying above $P$.
By Lemma \ref{orbits} we have that for any $Q$ lying above $P$, the orbit of $D(R|P)$ on $H$  has size greater or equal than $2e(Q|P)$. Let now $\sigma$ be a Frobenius for $P$ in $D(R|P)$. Then $\sigma\in N\gamma$, so it has a fixed point $z\in H$. Notice that in general this is  an orbit of size one of $\sigma$, not an orbit of size one of $D(R|P)$. Since we can write 
\[D(R|P)=\bigcup^{f(R|P)}_{i=1} I(R|P)\sigma^i,\]
where $I(R|P)$ is the inertia subgroup, we see that, by point (3) of Lemma \ref{orbits}, 
\[|D(R|P)\cdot z|=\left|\bigcup^{f(R|P)}_{i=1} I(R|P)\sigma^i\cdot z\right|=|I(R|P)\cdot z|=e(Q'|P),\]
where $Q'$ is the place of $\vF_q(\alpha)$ corresponding to the orbit of $z$ under the action of $D(R|P)$. A contradiction follows, since this orbit must have size greater than or equal than $2e(Q'|P)$.
\end{proof}

\begin{remark}\label{suff}
It is clear from the proof of the theorem that if all elements in $N\gamma$ have exactly one fixed point, then $\varphi$ is permutation, independently of the size of $q$. On the other hand, when $q<C$ it is possible to find examples of permutation rational functions such that not all elements in $N\gamma$ have exactly one fixed point when acting on $H$. It is worth noticing that these do not appear in degree 3, while they do in degree 4.
\end{remark}

\begin{corollary}\label{fixed_points_cor}
Let $\varphi=f/g\in \vF_q(x)$ be a separable permutation rational function and $q\geq C$. Then the field of constants $k$ of the splitting field of $f-tg$ properly contains $\vF_q$.
\end{corollary}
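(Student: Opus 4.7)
The plan is to argue by contradiction: assume $k=\vF_q$ and derive that $\deg\varphi=1$, contradicting the (implicit) nontriviality assumption $\deg\varphi\geq 2$ (the corollary is vacuous for M\"obius transformations, where indeed $k=\vF_q$ trivially). Suppose then that $k=\vF_q$. Since the field of constants of $M$ coincides with $\vF_q$, we have $N=\Gal(M:k(t))=\Gal(M:\vF_q(t))=G$. Moreover, the extension $k:\vF_q$ is trivial, so its Frobenius is the identity, and we may choose $\gamma=\mathrm{id}_M$ as a lift. In particular $\mathrm{id}_M\in N\gamma=G$.

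Now I would invoke Theorem \ref{fixed_points}: since $\varphi$ is a permutation and $q\geq C$, every element of $N\gamma$ has exactly one fixed point on $H$. Applying this to $\mathrm{id}_M$ gives $|H|=1$, because the identity fixes every element of $H$ pointwise.

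Finally, I would identify $|H|$ with $\deg\varphi$. Since $f,g$ are coprime, the polynomial $f-tg\in\vF_q[t][x]$ is irreducible over $\vF_q(t)$: indeed it has degree $1$ in $t$, so by Gauss's lemma any factorization over $\vF_q(t)[x]$ comes from a factorization in $\vF_q[t][x]$ in which one factor must lie in $\vF_q[x]$; such a factor would be a common divisor of $f$ and $g$, hence a unit. Consequently $f-tg$ is (up to a unit) the minimal polynomial of $\alpha$ over $\vF_q(t)$, and separability of $\varphi$ gives
\[
|H|\;=\;[\vF_q(\alpha):\vF_q(t)]\;=\;\deg_x(f-tg)\;=\;\max(\deg f,\deg g)\;=\;\deg\varphi.
\]
The previous paragraph then forces $\deg\varphi=1$, a contradiction.

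There is no real obstacle here: the corollary is essentially an immediate observation that if the field of constants were trivial, then $\mathrm{id}_M$ would belong to the coset $N\gamma$, and the fixed-point criterion of Theorem \ref{fixed_points} would collapse to the statement that the identity has a unique fixed point on $H$, which is absurd for any non-trivial separable extension. The only mildly technical point is the irreducibility of $f-tg$ used to pin down $|H|=\deg\varphi$.
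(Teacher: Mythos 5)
Your proof is correct and follows essentially the same route as the paper: if $k=\vF_q$ then $\gamma$ may be taken to be the identity, so $\mathrm{id}_M\in N\gamma=G$, and the fixed-point criterion of Theorem \ref{fixed_points} is violated since the identity fixes all of $H$. Your additional verification that $|H|=\deg\varphi\geq 2$ (via irreducibility of $f-tg$) and your remark about the implicit exclusion of degree-one maps merely make explicit what the paper leaves tacit.
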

\begin{proof}
In the notation of Theorem \ref{fixed_points}, if $k=\vF_q$ then $\gamma=1$ so $N\gamma=G$. But the identity has more than one fixed point, and the claim follows.
\end{proof}

The combination of the above results together with the equivalence relation $\sim$ mentioned in the introduction will allow an exact counting of the permutation rational functions of degree $3$.

\section{Permutation rational functions of degree three}\label{sec:degree_three}
The goal of this section is to convert the group theoretical condition obtained in Theorem \ref{fixed_points} into equations for the coefficients of permutation rational functions of degree 3.
We first need a technical lemma which allows  us to control the constant $C$ appearing in Theorem \ref{fixed_points}.
\begin{lemma}\label{lemma_control_C}
Let $\varphi=f/g\in \vF_q(x)$ be a separable rational function of degree 3. Then the constant $C$ appearing in Theorem \ref{fixed_points} can be chosen less than $13$.
\end{lemma}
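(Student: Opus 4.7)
The plan is to show that, for a separable $\varphi$ of degree $3$, both the genus $g_M$ of the splitting field $M$ of $f-tg$ and the sum $\sum_{\text{ramified }P}w_P(\sigma)$ appearing in the quantity $S$ in the proof of Theorem \ref{fixed_points} are bounded by small explicit constants, and then to solve the inequality $S\geq 2$ for $q$. Applying Riemann--Hurwitz to the separable degree-$3$ cover $\varphi\colon\vP^1\to\vP^1$, the ramification divisor has degree exactly $4$, so $\varphi$ has at most $4$ branch points; these are the places of $\vF_q(t)$ ramified in $L=\vF_q(t)(\alpha)$ and hence in $M$. Since $w_P(\sigma)\in[0,1]$, this yields
\[
\sum_{\text{ramified }P\in\mathcal P^1(\vF_q(t)/\vF_q)} w_P(\sigma)\leq 4.
\]

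Next I would show $g_M\leq 1$ by a case analysis on $G=\Gal(M/\vF_q(t))\leq S_3$. If $|G|\leq 3$, a direct inspection of the three possibilities shows that $M=\vF_q(\alpha_i)$ for an appropriate root $\alpha_i$ of $f-tg$; since $\vF_q(\alpha_i)$ is a rational function field, $g_M=0$. When $G=S_3$ with the full constant field of $M$ equal to $\vF_q$, pick distinct roots $\alpha_1,\alpha_2$ and set $L_i=\vF_q(\alpha_i)$. Each $L_i$ is rational of genus $0$ with full constant field $\vF_q$ and $[M:L_i]=2$; since the two order-$2$ subgroups of $S_3$ fixing $L_1$ and $L_2$ generate $S_3$, one has $L_1\cap L_2=\vF_q(t)$ and $L_1L_2=M$, so Castelnuovo--Severi's inequality (\cite[Theorem~3.11.3]{stichtenoth2009algebraic}) gives
\[
g_M\leq 2g_{L_1}+2g_{L_2}+(2-1)(2-1)=1.
\]
The remaining case $G=S_3$ with $M$ having constant field $\vF_{q^2}$ forces $\Gal(M/\vF_{q^2}(t))=A_3$, whence $M=\vF_{q^2}(\alpha_i)$ is rational with $g_M=0$.

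Combining the two bounds gives
\[
S\geq q+1-2\sqrt q-4=q-2\sqrt q-3,
\]
which is at least $2$ exactly when $\sqrt q\geq 1+\sqrt 6$, i.e.\ when $q\geq 7+2\sqrt 6\approx 11.9$. Therefore $C=12<13$ suffices. The main obstacle is the $G=S_3$ case: trying to control the possible wild ramification in characteristics $2$ and $3$ directly via Riemann--Hurwitz would be cumbersome, but Castelnuovo--Severi circumvents any such analysis once the constant-field hypothesis is checked as above.
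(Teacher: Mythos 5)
Your argument is correct and follows essentially the same route as the paper: the number of ramified places is bounded by $4$ via the Hurwitz genus formula for $\vF_q(\alpha)/\vF_q(t)$, the genus bound $g_M\le 1$ comes from the Castelnuovo/Riemann inequality applied to $M=\vF_q(\alpha_1,\alpha_2)$ with $[M:\vF_q(\alpha_i)]\le 2$, and one then solves $q+1-2\sqrt q-4\ge 2$. Your case analysis on $G$ and on the constant field of $M$ amounts to a more careful check of the constant-field hypothesis in \cite[Theorem~3.11.3]{stichtenoth2009algebraic}, which the paper dispatches in one stroke via \cite[Corollary~3.11.4]{stichtenoth2009algebraic}.
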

\begin{proof}
The constant $C$ in Theorem \ref{fixed_points} comes from the quantity \eqref{choice_of_C}, which we want to be greater or equal than $2$. In order to obtain this, we need explicit upper bounds for the genus $g_M$ and the number of ramified places of the splitting field $M$ of $f-tg$. 
The genus $g_M$ of $M$ can be easily bounded by Riemann's Inequality as follows. For any two roots $\alpha,\beta$ of $f-tg$, seen as a polynomial over $\vF_q(t)$, we have that $M=\vF_q(\alpha,\beta)$ (notice that $t$ can be obtained by computing $f(\alpha)/g(\alpha)$). Now observe that $[M:\vF_q(\alpha)]=[M:\vF_q(\beta)]\leq 2$ and apply \cite[Corollary 3.11.4]{stichtenoth2009algebraic} with $F=M$ and $K=\vF_q$ to see immediately that $g_M\leq 1$.

To bound the term $\sum_{\text{ramified }  P\in \mathcal P^1(\vF_q(t)/\vF_q)}w_P(\sigma)$ of \eqref{choice_of_C}, notice that it is enough to bound the number of places of degree 1 of $\vF_q(t)$ that ramify in $M$, as $0\leq w_P(\sigma)\leq 1$. Notice that all places of $\vF_q(t)$ ramifying in $M$ already ramify in $\vF_q(\alpha)$, because $M$ is the Galois closure of $\vF_q(\alpha):\vF_q(t)$. Now apply Hurwitz genus formula (see \cite[Theorem 3.4.13]{stichtenoth2009algebraic}) to see that there are at most $4$ places of $\vF_q(t)$ that ramify in $\vF_q(\alpha)$. 

All in all, it follows that if $q\geq 13$, then
\[S=q+1-2g_M\sqrt q - \sum_{\text{ramified }  P\in \mathcal P^1(F/\vF_q)}w_P(\sigma)\geq q+1-2\sqrt q - 4\geq  2\]
\end{proof}

We are now ready to start the classification of permutation rational functions of degree 3. Let us first recall the following definition.

\begin{definition}
 Let $K$ be a field and $f=x^3+ax^2+bx+c\in K[x]$. The \emph{quadratic resolvent} of $f$ is the polynomial defined by:
 $$R_2(x)\coloneqq x^2+(ab - 3c)x + (a^3c + b^3 + 9c^2 - 6abc).$$
 Equivalently, if $r_1,r_2,r_3$ are the roots of $f$ in $\overline{K}$, the quadratic resolvent is given by
 $$R_2(x)=(x-(r_1^2 r_2 + r_2^2 r_3 + r_3^2 r_1))(x-(r_2^2r_1 + r_1^2r_3 + r_3^2r_2)).$$
\end{definition}

Let now $\varphi=f/g\in \vF_q(x)$ be a separable rational function of degree 3, where $f,g$ are coprime polynomials. Let $M$ be the splitting field of $f-tg$ over $\vF_q(t)$, and let $k$ be its field of constants. Let $G\coloneqq\Gal(M/\vF_q(t))$ and $N\coloneqq \Gal(M/k(t))$. Note that $N\trianglelefteq G$.

\begin{lemma}\label{discriminant}
 Let $q\geq 13$. Let $R_2(x)\in \vF_q(t)[x]$ be the quadratic resolvent of the polynomial $f-tg\in \vF_q(t)[x]$. The following three conditions are equivalent.
 \begin{enumerate}
  \item The function $\varphi=f/g$ is permutation.
  \item The arithmetic Galois group $G$ is $S_3$ and the geometric Galois group $N$ is $A_3$.
  \item The quadratic resolvent of $f-tg$ is irreducible in $\vF_q(t)[x]$ but reducible in $\vF_{q^2}(t)[x]$.
  \end{enumerate}
  If in addition $q$ is odd, the conditions above are equivalent to the following.
  \begin{enumerate}
  \item[(4)] Let $\Delta(t)\in \vF_q[t]$ be the discriminant of $f-tg$. There exists $u\in \vF_q\setminus\vF_q^2$ such that $\Delta(t)=u\cdot r(t)^2$, where $r(t)\in \vF_q[t]$.
 \end{enumerate}
\end{lemma}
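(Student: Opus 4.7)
The plan is to prove $(1) \Leftrightarrow (2) \Leftrightarrow (3)$ and, when $q$ is odd, $(2) \Leftrightarrow (4)$. I first record that $f - tg$, viewed as a polynomial in $t$ over $\vF_q(x)$, is linear with coefficients $f(x), -g(x)$ coprime in $\vF_q[x]$, so it is irreducible in $\vF_q[x,t]$ and hence in $\vF_q(t)[x]$. Consequently $G$ acts transitively on its three roots, and being a subgroup of $S_3$ it satisfies $G \in \{A_3, S_3\}$. Moreover $G/N \cong \Gal(k/\vF_q)$ is cyclic.

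For $(1) \Rightarrow (2)$, Lemma \ref{lemma_control_C} lets me apply Theorem \ref{fixed_points}: every $\sigma \in N\gamma$ has exactly one fixed point on the three roots. Combining $G \in \{A_3, S_3\}$, the cyclicity of $G/N$, and Corollary \ref{fixed_points_cor} (which rules out $N = G$), the only remaining possibilities are $(G,N) = (A_3,1)$ or $(S_3,A_3)$. In the first case, $\gamma$ is a $3$-cycle with $0$ fixed points, contradicting the fixed-point condition; hence $(G,N) = (S_3,A_3)$. Conversely, if $(G,N) = (S_3,A_3)$ then $N\gamma$ is exactly the set of three transpositions of $S_3$, each having precisely one fixed point among the three roots, so Theorem \ref{fixed_points} yields that $\varphi$ permutes $\vP^1(\vF_q)$.

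For $(2) \Leftrightarrow (3)$, I use the standard fact that odd permutations in $S_3$ swap the two roots of $R_2$ while even permutations fix them. Thus $R_2$ is irreducible over $\vF_q(t)$ iff $G \not\subseteq A_3$ iff $G = S_3$. Given $G = S_3$, reducibility of $R_2$ over $\vF_{q^2}(t)$ says that the two roots of $R_2$ lie in $\vF_{q^2}(t)$, which forces $\vF_{q^2} \subseteq k$. This leaves only $(G,N) \in \{(S_3, A_3), (S_3, 1)\}$, and ruling out $N = 1$ is the main obstacle. To do so, I observe that if $N = 1$ then $M = k(t) = \vF_{q^6}(t)$, and the unique degree-$3$ subextension of $\vF_q(t)$ inside $\vF_{q^6}(t)$ is $\vF_{q^3}(t)$; hence $\vF_q(\alpha) = \vF_{q^3}(t)$, so $\alpha \in \vF_{q^3}(t)$. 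But $\varphi(\alpha) = t$ then expresses $\alpha$ as a right inverse of $\varphi$, and multiplicativity of the degree of rational self-maps of $\vP^1$ forces $\deg \varphi \cdot \deg \alpha = 1$; since $\deg \varphi = 3$, no nonconstant $\alpha$ works, while a constant $\alpha$ would give a common zero of $f$ and $g$, contradicting coprimality. The converse direction of $(2) \Leftrightarrow (3)$ is immediate from the action description above.

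Finally, for $(2) \Leftrightarrow (4)$ in odd characteristic, the unique quadratic subextension of $M/\vF_q(t)$ is $\vF_q(t)(\sqrt{\Delta(t)})$, so $G = S_3$ iff $\Delta$ is not a square in $\vF_q(t)$. Assuming $G = S_3$, the condition $N = A_3$ (equivalently $k = \vF_{q^2}$) is equivalent to $\vF_q(t)(\sqrt{\Delta}) = \vF_{q^2}(t)$, which is in turn equivalent to the existence of $u \in \vF_q \setminus \vF_q^2$ and $r \in \vF_q(t)$ with $\Delta = u \cdot r^2$. A short coprimality argument on the numerator and denominator of $r$, together with $\Delta \in \vF_q[t]$, upgrades $r$ to a polynomial in $\vF_q[t]$, matching $(4)$.
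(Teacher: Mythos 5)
Your proof is correct and follows essentially the same route as the paper: Theorem \ref{fixed_points} together with Lemma \ref{lemma_control_C} and Corollary \ref{fixed_points_cor} for (1)$\Leftrightarrow$(2), the quadratic resolvent and the Galois correspondence for (3), and the discriminant for (4). Your only local deviations — ruling out $(G,N)=(A_3,1)$ via the fixed-point-free $3$-cycle instead of geometric irreducibility of $f-tg$, ruling out $N=1$ in (3)$\Rightarrow$(2) by a degree/right-inverse argument where the cyclicity of $G/N\cong\Gal(k:\vF_q)$ you already recorded would do, and working with $\sqrt{\Delta}$ directly rather than with $\disc R_2=\disc(f-tg)$ — are all valid.
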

\begin{proof}
 First, suppose that $\varphi$ is a permutation rational function. Since $f-tg$ is irreducible, $G$ is a transitive subgroup of $S_3$ and thus there are only two possibilities: $G\simeq A_3$ or $S_3$. If $G\simeq A_3$, then $N=\{1\}$ because of Corollary  \ref{fixed_points_cor}. But this means $f-tg$ splits over $\overline\vF_q(t)$, which is a contradiction, as it is geometrically irreducible. 
 Thus, $G\simeq S_3$ and for the same reasons one must have $N\simeq A_3$, since the field of constants $k$ is non-trivial, which forces also $[k:\vF_q]=2$.
 
 Conversely, the non-trivial coset of $A_3$ in $S_3$ contains the transpositions $(12),(13),(23)$, all of which have exactly one fixed point. Thus, in this case $\varphi$ is a permutation rational function by Theorem \ref{fixed_points}. This proves that (1) and (2) are equivalent.
 
 To prove that (2) and (3) are equivalent, first notice that if $R_2(x)$ is irreducible over $\vF_q(t)[x]$, then $F\coloneqq\vF_q(t)[x]/(R_2(x))$ is a subfield of $M$. Moreover, the Galois group of a separable irreducible cubic is $S_3$ if and only if its quadratic resolvent is irreducible.  
 Thus, if (2) holds then $F$ is the unique quadratic extension of $\vF_q(t)$ contained in $M$, and it must coincide with the fixed field of $N$, so\ $F=\vF_{q^2}(t)$. This implies immediately (3). Conversely, if (3) holds then $G\simeq S_3$ and the fixed field of $N$ is $\vF_{q^2}(t)$, easily implying (2).
 
 To finish the proof, notice that if $q$ is odd, (3) is equivalent to the discriminant of $R_2(x)$ being a square in $\vF_{q^2}(t)$ but not in $\vF_q(t)$. Since the discriminant of a cubic polynomial coincides with the discriminant of its quadratic resolvent, (3) and (4) are clearly equivalent.
\end{proof}

\begin{remark}\label{sufficient_condition}
 The proof of the above lemma, together with Remark \ref{suff}, shows that the only implication that needs the hypothesis $q\geq 13$ is (1)$\Rightarrow$(2). Every other implication holds for every $q$.
\end{remark}

As a corollary of the above lemma, we can immediately recover the classification of normalized permutation polynomials of degree 3 for $q\geq 13$, without using Hermite's criterion. Recall that a polynomial of degree $3$ is called \emph{normalized} if it is of the form $x^3+ax$ when $3\nmid q$, or of the form $x^3+ax^2+bx$ when $3\mid q$. The reason for this is that any polynomial of degree 3 can be brought in one of these forms via a transformation of the shape $f\mapsto uf(vx+w)+z$ for some $u,v,w,z\in \vF_q$. 

\begin{corollary}\label{degree_3_class}
 Let $q\geq 13$ and let $f\in \vF_q[x]$ be a normalized polynomial. Then $f$ is permutation if and only if either $q\equiv 2 \bmod 3$ and $f=x^3$ or $q\equiv 0 \bmod 3$ and $f=x^3+bx$ where $b=0$ or $-b$ is not a square. 
\end{corollary}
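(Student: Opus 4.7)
The plan is to compute the discriminant $\Delta(t)\in\vF_q[t]$ of $f(x)-t$ for each normalized form and to apply Lemma~\ref{discriminant}(4), checking when $\Delta(t)=u\cdot r(t)^2$ for some non-square $u\in\vF_q$ and some $r(t)\in\vF_q[t]$. I would handle the inseparable Frobenius case $f=x^3$ in characteristic $3$ and the characteristic $2$ case (where (4) does not apply, since it assumes $q$ odd) by short direct arguments outside the lemma.

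First I would treat $3\nmid q$ with $q$ odd, where $f=x^3+ax$ and the depressed-cubic formula gives $\Delta(t)=-4a^3-27t^2$. If $a=0$, then $f=x^3$ permutes $\vF_q$ iff $\gcd(3,q-1)=1$, iff $q\equiv 2\bmod 3$. If $a\neq 0$, then $f$ is separable, and I would write $r(t)=\alpha t+\beta$ and match coefficients in $u(\alpha t+\beta)^2=-27t^2-4a^3$: the cross-term equation $2u\alpha\beta=0$ forces $\beta=0$ (as $\alpha\neq 0$, else the $t^2$-coefficient vanishes), and then $-4a^3=u\beta^2=0$ contradicts $a\neq 0$. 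For even characteristic, a short direct computation suffices: when $a\neq 0$, Frobenius bijectivity produces $\sqrt{a}\in\vF_q^\times$ with $f(\sqrt{a})=a\sqrt{a}+a\sqrt{a}=0=f(0)$, so $f$ is not injective; and $f=x^3$ permutes iff $q\equiv 2\bmod 3$ by the same cyclic-group argument.

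For $3\mid q$, I would take $f=x^3+ax^2+bx$ and specialize the cubic discriminant to characteristic $3$, obtaining $\Delta(t)=a^3 t+a^2b^2-b^3$. The case $a=b=0$ gives the inseparable Frobenius $f=x^3$, which is evidently a permutation and must be treated separately. In all other cases $f$ is separable, since $f'(x)=-ax+b$ is coprime to $f(x)-t$ in $\vF_q(t)[x]$ (either it is a nonzero constant, or its unique root $b/a$ does not annihilate $f(x)-t$ as $t$ is transcendental). When $a\neq 0$, $\Delta(t)$ is linear in $t$ with nonzero leading coefficient and hence cannot equal $u\cdot r(t)^2$ (whose $t$-degree is even), so $f$ is not a permutation; when $a=0$ and $b\neq 0$, $\Delta=-b^3=(-b)\cdot b^2$, so Lemma~\ref{discriminant}(4) yields permutation iff $-b$ is a non-square. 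The main obstacle is purely organizational: the inseparable Frobenius must be excised before invoking the lemma, and the characteristic-$2$ subcase handled outside of (4); both bookkeeping steps are short but genuinely necessary, since the discriminant criterion presupposes separability and $q$ odd.
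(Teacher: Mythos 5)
Your proposal is correct, and for the case $3\mid q$ it is essentially the paper's own argument: compute the discriminant of $f-t$ for the normalized form $x^3+ax^2+bx$ and apply condition (4) of Lemma~\ref{discriminant} (your formula $\Delta(t)=a^3t+a^2b^2-b^3$ is the correct specialization; the paper's displayed $4a^2t-4b^3+a^2b^2$ has an apparent typo in the $t$-coefficient, with the same conclusion that $a$ must vanish). Where you diverge is the case $3\nmid q$: the paper treats it uniformly in all characteristics via condition (3), writing the quadratic resolvent $R_2=x^2+3tx+b^3+9t^2$ of $x^3+bx-t$, imposing the factorization $(x-(ut+v))(x-(u^qt+v^q))$ over $\vF_{q^2}(t)$, and extracting the norm/trace conditions $\N_{\vF_{q^2}/\vF_q}(u)=9$, $v(3+2u)=0$ to force $v=0$ and hence $b=0$; you instead use the discriminant criterion (4) for odd $q$ (degree parity plus the cross-term forcing $\beta=0$, hence $a=0$) and dispose of even $q$ by the elementary observation that $f(\sqrt{a})=f(0)$ when $a\neq 0$. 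Both routes are valid; the paper's buys a single computation independent of the parity of $q$, while yours is more elementary in each branch, and your characteristic-$2$ argument (as well as the cyclic-group argument for $x^3$) has the small added virtue of not invoking Theorem~\ref{fixed_points} at all, so it does not even need the hypothesis $q\geq 13$ there. Your separability bookkeeping ($f'=-ax+b$ in characteristic $3$, exclusion of the Frobenius $x^3$ before invoking the lemma, and restriction of (4) to odd $q$) is exactly the care the lemma requires, so I see no gap.
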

\begin{proof}
 First, let $3\mid q$ and $f=x^3+ax^2+bx$ for some $a,b\in \vF_q$. If $a=b=0$, then $f$ is permutation. Otherwise, $f$ is separable and we can apply Lemma \ref{discriminant}: the discriminant of $f-t$ is given by $4a^2t-4b^3+a^2b^2$. Condition (4) then holds if and only if $a=0$ and $-b$ is not a square.
 
 If $3\nmid q$, let $f=x^3+bx$. The quadratic resolvent of $f-t$ is given by $R_2=x^2+3tx+b^3+9t^2$. By Lemma \ref{discriminant}, $f$ is permutations if and only if $R_2$ is irreducible over $\vF_q(t)$ but not over $\vF_{q^2}(t)$. This is equivalent to ask the existence of $u,v\in \vF_{q^2}$, not both in $\vF_q$, such that $R_2=(x-(ut+v))(x-(u^qt+v^q))$. One checks immediately that this condition implies that $\N_{\vF_{q^2}/\vF_q}(u)=9$ and $v(3+2u)=0$. The solution $3+2u=0$ is incompatible with the norm condition, so we must have $v=0$. Thus, $b=0$ and of course $f=x^3$ is permutation if and only if $q\equiv 2 \bmod 3$.
\end{proof}

For every $n\in\vN$, let now $R_n\subseteq \vF_q(x)$ be the set of rational functions of degree $n$. Let $\displaystyle \mathcal M=\left\{\frac{ax+b}{cx+d}\colon ad-bc\neq 0\right\}\subseteq \vF_q(x)$ be the set of M\"obius transformations. $\mathcal M$ acts on $R_n$ on the left and on the right by composition. Clearly, this action restricts to an action on the set of permutation rational functions of degree $n$.
\begin{definition}\label{def:eqrelation}
 We say that two rational functions $\varphi,\psi\in R_n$ are \emph{equivalent} if they lie in the same double coset for the action of $\mathcal M$ on $R_n$, i.e.\ if there exist $m_1,m_2\in \mathcal M$ such that $m_1\circ\varphi\circ m_2=\psi$.
\end{definition}

\section{The odd characteristic case}\label{sec:odd_char}
Throughout this section, we assume that $q$ is odd.
\begin{theorem}\label{complete_classification}
 Let $\varphi\in \vF_q(x)$ be a separable permutation rational function of degree 3. Then the following hold.
 \begin{enumerate}
  \item $\varphi$ is equivalent to a rational function of the form $\displaystyle \frac{x^3+ax}{bx^2+1}\in \vF_q(x)$, where $a,b\in\vF_q$ and $b\neq 0$.
  \item Let $a,b\in \vF_q$ and $b\neq 0$. A rational function $\displaystyle \frac{x^3+ax}{bx^2+1}\in \vF_q(x)$ is permutation if and only if $-b$ is not a square and $ab=9$.
 \end{enumerate}
\end{theorem}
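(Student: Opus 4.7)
The plan is to prove (2) by a direct discriminant computation via Lemma \ref{discriminant}(4), and to prove (1) by a M\"obius normalization that exploits the geometric cyclic symmetry forced by Lemma \ref{discriminant}.

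For (2), I set $f=x^3+ax$ and $g=bx^2+1$, so that $f-tg=x^3-btx^2+ax-t$. A direct expansion yields
\[
\Delta(t)=\operatorname{disc}_x(f-tg)=-4b^3t^4+(a^2b^2+18ab-27)t^2-4a^3,
\]
a polynomial with only even powers of $t$. Lemma \ref{discriminant}(4) requires $\Delta(t)=u\,r(t)^2$ with $u\in\vF_q\setminus\vF_q^2$ and $r\in\vF_q[t]$. Writing $r(t)=\alpha t^2+\beta t+\gamma$ and matching coefficients, the vanishing of the $t^3$ and $t^1$ coefficients forces $\beta=0$, leaving the three even-degree matchings $u\alpha^2=-4b^3$, $u\gamma^2=-4a^3$, and $2u\alpha\gamma=a^2b^2+18ab-27$. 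Eliminating $u,\alpha,\gamma$ gives $(a^2b^2+18ab-27)^2=64a^3b^3$; putting $s=ab$ this factors as $(s-9)^3(s-1)=0$. The root $s=1$ is inadmissible because it makes $x^2+a$ a common factor of $f$ and $g$ and so drops the degree of $\varphi$, leaving $ab=9$. Finally, the non-square condition on $u$ is equivalent, via $u=-4a^3/\gamma^2$ and $a=9/b$ (with $9$ a square), to $-b$ being a non-square; conversely, assuming $ab=9$ and $-b$ a non-square, the same equations produce explicit valid data $(u,\alpha,\beta,\gamma)$, so Lemma \ref{discriminant}(4) then implies $\varphi$ is a permutation.

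For (1), Lemma \ref{discriminant} forces the geometric Galois group of $\varphi$ to be $A_3\cong\vZ/3$, so over $\overline{\vF}_q$ the cover $\varphi:\vP^1\to\vP^1$ is cyclic of degree $3$ and admits an automorphism $\tau\in\operatorname{PGL}_2(\overline{\vF}_q)$ of order $3$ with $\varphi\circ\tau=\varphi$, defined over $\vF_{q^2}$. The two fixed points of $\tau$ are the (totally ramified) ramification points of $\varphi$ and form a Galois-stable pair. The normalizer of $\langle\tau\rangle$ in $\operatorname{PGL}_2(\overline{\vF}_q)$ consists of the centralizing torus $T=Z(\tau)$ together with the coset of involutions that invert $\tau$. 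My plan is to pick a $\vF_q$-rational involution $\iota$ in this coset whose two fixed points are in $\vP^1(\vF_q)$; its existence follows from a case analysis of whether the ramification pair lies in $\vP^1(\vF_q)$ (split $T$) or forms an irreducible degree-$2$ divisor (non-split $T$), since in each case the normalizer coset contains both $\vF_q$-split and non-split representatives for odd $q$.

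Since $\iota$ normalizes $\langle\tau\rangle$, it descends through $\vP^1\to\vP^1/\langle\tau\rangle=\vP^1_t$ to an involution $\iota'$ satisfying $\varphi\circ\iota=\iota'\circ\varphi$. Composing $\varphi$ on both sides with $\vF_q$-M\"obius transformations that carry the fixed points of $\iota$ (on the source) and of $\iota'$ (on the target) simultaneously to $\{0,\infty\}$, I may assume $\iota=\iota'=-\mathrm{id}$ and $\varphi(0)=0$, $\varphi(\infty)=\infty$. The relation $\varphi(-x)=-\varphi(x)$ then forces $\varphi=(x^3+ax)/(bx^2+1)$: writing $\varphi=f/g$ with $\deg f=3$, $\deg g\leq 2$, the parity decompositions of $f$ and $g$ together with $\gcd(f,g)=1$ leave only $f(x)=x^3+ax$ and $g(x)=bx^2+c$ (with $c\ne 0$, using $g(0)\ne 0$), and one rescales $g$ so that $c=1$. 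The degenerate case $b=0$ corresponds to a separable permutation polynomial $\varphi=x^3+ax$, which is handled by the additional M\"obius equivalence $\varphi\sim 1/\varphi(1/x)=x^3/(ax^2+1)$; when $a=0$ as well (i.e.\ $\varphi=x^3$, possible only when $q\equiv 5\bmod 6$) a further auxiliary M\"obius moving the ramification divisor $\{0,\infty\}$ to a pair adapted to a non-square $-b$ puts $\varphi$ in the desired form with $b\ne 0$. The main obstacle is the exhibition of the $\vF_q$-rational split involution $\iota$, which hinges on the $\vF_q$-rational structure of $N(T)/T$ in $\operatorname{PGL}_2$ in both the split and non-split cases.
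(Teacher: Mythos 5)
Your proof of part (2) is essentially the paper's own computation, specialized to the family with no $x^2$ term: the discriminant of $x^3-btx^2+ax-t$, the matching $\Delta=u\,r(t)^2$, and the elimination giving $(ab-9)^3(ab-1)=0$ all check out, including the characteristic-$3$ degeneration. Your route to part (1), however, is genuinely different: the paper first normalizes by elementary M\"obius moves to $\frac{x^3+a_2x^2+a_1x}{b_2x^2+1}$ and then uses the ``square quartic'' conditions on the discriminant to force $a_2=0$, whereas you use that the geometric monodromy $A_3$ makes the cover geometrically cyclic and descend a rational involution normalizing $\langle\tau\rangle$ to make $\varphi$ odd. That is an attractive structural shortcut, and the auxiliary facts it needs (rationality of the centralizer torus, existence of a rational involution in the nontrivial normalizer coset with rational fixed points, and rationality of the fixed points of the descended involution $\iota'$, which follows from injectivity of $\varphi$ on $\vP^1(\vF_q)$) can indeed be supplied when the characteristic is not $3$.

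There are, however, genuine gaps. First, characteristic $3$: an element of order $3$ in $\mathrm{PGL}_2$ over a field of characteristic $3$ is unipotent, has exactly \emph{one} fixed point, and its centralizer is not a torus, so your dichotomy ``split $T$ versus non-split $T$'' and the Galois-stable \emph{pair} of ramification points simply do not exist; this case is part of the theorem ($q\equiv 3\bmod 6$, normal form $x^3/(bx^2+1)$ with $a=9/b=0$). The argument can be repaired (after moving the unique rational fixed point of $\tau$ to $\infty$, $\tau$ is $x\mapsto x+c$ and its normalizer is $\{x\mapsto \pm x+d\}$, so rational involutions with rational fixed points still exist), but as written your construction fails there. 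Second, small $q$: both the ``only if'' direction of (2) and the $A_3$ input to (1) invoke Lemma \ref{discriminant} in the direction (1)$\Rightarrow$(2)/(4), which by Remark \ref{sufficient_condition} is only available for $q\geq 13$; you never treat odd $q\in\{3,5,7,9,11\}$, which the paper settles by a direct (computer) check. Finally, your handling of the degenerate case $\varphi\sim x^3$ (to achieve $b\neq 0$) is only gestured at; a concrete fix is to conjugate by the M\"obius map carrying $\{\pm 1\}$ to $\{0,\infty\}$, which transforms $x^3$ into $(x^3+3x)/(3x^2+1)$.
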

\begin{proof}
Let $f,g\in \vF_q[x]$ be coprime polynomials such that $\varphi=f/g$.

First, we claim that $\varphi$ is equivalent to a rational function of the form $f'/g'$ where $f',g'\in\vF_q[x]$ are such that $\deg f'=3$ and $\deg g'=2$.

Since $\deg \varphi=3$, at least one between $f$ and $g$ has degree 3, so up to composing $\varphi$ on the left with $1/x$ we can assume that $\deg f=3$. If $\deg g=2$, there is nothing to prove. If $\deg g=1$, then $\varphi(\infty)=\infty$ but $g$ has a root $\alpha\in \vF_q$ and thus $\varphi(\alpha)=\infty$, which is a contradiction because $\varphi$ is permutation. If $\deg g=0$, $\varphi$ is a polynomial of degree 3 and we can assume that $\varphi=x^3+ax^2+bx$ for some $a,b\in\vF_q$. If $b=0$ then $a=0$ because $\varphi$ cannot have more than one root. In this case, since $\varphi$ is separable, we must have $3\nmid q$. Then 
\[ \frac{x}{x-1}\circ x^3\circ \frac{x}{x+1}=\frac{x^3}{-3x^2-3x-1}.\] Otherwise, $b\neq 0$ and \[ \frac{1}{x}\circ \varphi\circ \frac{1}{x}=\frac{x^3}{bx^2+ax+1}.\] Finally, if $\deg g=3$ and the coefficient of its leading term is $b_3$, composing $\varphi$ on the left with $x/(x-1/b_3)$ yields a function $\varphi'=f/g'$ where $\deg g'\leq 2$. This proves the existence of $f',g'$ as in the claim.

Now suppose $g'(x)=c_2x^2+c_1x+c_0$ for some $c_0,c_1,c_2\in \vF_q$ with $c_2\neq 0$. Composing $f'/g'$ on the right by $x-c_1/(2c_2)$ yields a function 
\[\varphi'=\displaystyle \frac{x^3+a_2'x^2+a_1'x+a_0'}{c_2x^2+c_0'}.\] 
Notice that $c_0'\neq 0$ because the denominator cannot have roots. Composing $\varphi'$ on the left by $x-a_0'/c_0'$ and multiplying the whole function by $c_0'$ gives us a function of the form $\displaystyle \frac{x^3+a_2x^2+a_1x}{b_2x^2+1}$ for some $a_1,a_2,b_2\in\vF_q$ with $b_2\neq 0$.

Next, we are going to show that an element $\displaystyle \psi=\frac{x^3+a_2x^2+a_1x}{b_2x^2+1}$ with $b_2\neq 0$ is permutation if and only if $a_2=0$, $-b_2$ is not a square and $a_1b_2=9$. This will conclude the proof of the theorem.

To prove it, we first let $q\geq 13$ and apply Lemma \ref{discriminant}. Notice that the hypothesis of the lemma hold since the fact that $b_2\neq 0$ implies that at least one between $x^3+a_2x^2+a_1x$ and $b_2x^2+1$ is separable. The discriminant of $x^3+a_2x^2+a_1x-t(b_2x^2+1)$ is given by:
$$-4b_2^3\left(\underbrace{t^4 - \frac{3a_2}{b_2}t^3 - \frac{a_1^2b_2^2 + 18a_1b_2 - 12a_2^2b_2 - 27}{4b_2^3}t^2 + \frac{a_1^2a_2b_2 + 9a_1a_2 - 2a_2^3}{2b_2^3}t +\frac{ 4a_1^3 - a_1^2a_2^2}{4b_2^3}}_{h(t)}\right).$$
By condition (4) of Lemma \ref{discriminant}, $\psi$ is permutation if and only if the polynomial $h(t)$ inside the brackets is a square in $\vF_q[t]$ and $-b_2$ is not be a square in $\vF_q$.

We claim that if $\psi$ is permutation, then $a_2=0$. To show this, first notice that if $t^4+At^3+Bt^2+Ct+D\in \vF_q[t]$ is a square then:
$$\begin{cases}
   A^3+8C-4AB=0 & \\
   DA^2-C^2=0 & \\
  \end{cases}.
$$
Now substitute the coefficients of $h(t)$ into these equations and assume by contradiction that $a_2\neq 0$. This gives the following system:
  $$\begin{cases}
   -a_1^2b_2^2-a_2^2b_2+18a_1b_2-81=0 & \\
   -a_1^4b_2^2-5a_1^2a_2^2b_2-4a_2^4+18a_1^3b_2+36a_1a_2^2-81a_1^2=0 & .
  \end{cases}
  $$
  The first equation yields: \[ a_2^2=\frac{18a_1b_2-a_1^2b_2^2-81}{b_2}=\frac{-(a_1b_2-9)^2}{b_2},\] and substituting into the second one it follows that $36(a_1b_2-9)^3=0$. If $3\nmid q$, then $a_1b_2=9$ and consequently $a_2=0$, which is a contradiction. On the other hand, if $3\mid q$ one gets from the first equation $a_2^2=-a_1^2b_2$. If $a_1\neq 0$ then $-b_2$ is a square, which is a contradiction. Thus, $a_1=0$ and so $a_2=0$.
  
  Therefore, 
  \[ \frac{x^3+a_2x^2+a_1x}{b_2x^2+1}\quad \text{is permutation}\Longleftrightarrow \begin{cases} a_2=0, -b_2 \;\text{is not a square in $\vF_q$}\\ t^4-\frac{a_1^2b_2^2+18a_1b_2-27}{4b_2^3}t^2+\frac{a_1^3}{b_2^3}=(t^2+u)^2\end{cases}\]
  for some $u\in \vF_q$. 
  Of course this holds if and only if $a_2=0$, $-b_2$ is not a square and
 $$\begin{dcases}
    \frac{a_1^3}{b_2^3}=u^2 & \\
    -\frac{a_1^2b_2^2+18a_1b_2-27}{4b_2^3}=2u & 
   \end{dcases},$$
  which in turn is equivalent to $(a_1b_2-1)(a_1b_2-9)^3=0$. But $a_1b_2=1$ cannot hold, because in that case we would have $\varphi=a_1x$. Therefore we must have $a_1b_2=9$, and the claim follows.
  
  When $q< 13$, Remark \ref{sufficient_condition} shows that the conditions $-b_2\notin\vF_q^2$ and $a_1b_2=9$ are still sufficient for $\displaystyle \frac{x^3+a_1x}{b_2x^2+1}$ to be permutation. A complete check of the remaining cases with Magma \cite{MR1484478} shows that they are also necessary.
\end{proof}

\begin{remark}
 Since a non-square in $\vF_q$ remains a non-square in $\vF_{q^n}$ if and only if $n$ is odd, Theorem \ref{complete_classification} shows that a separable permutation rational function of degree 3 in $\vF_q(x)$ permutes $\vP^1(\vF_{q^n})$ if and only if $n$ is odd.
\end{remark}

\begin{remark}
Notice that (2) of Theorem \ref{complete_classification} can also be deduced using the theory of Redei functions.
\end{remark}

The following corollary shows that for odd $q$, any two separable rational functions of degree 3 in $\vF_q(x)$ are equivalent. 

\begin{corollary}\label{cor:equivalence_odd_char}
 The following hold.
 \begin{enumerate}
  \item Let $\varphi,\psi\in \vF_q(x)$ be two separable permutation rational functions of degree 3. Then $\varphi\sim\psi$.
  \item An inseparable rational function $\varphi\in\vF_q(x)$ of degree 3 is permutation if and only if $3\mid q$ and $\varphi\sim x^3$. 
 \end{enumerate}
\end{corollary}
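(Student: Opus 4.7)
The plan for part (1) is to apply Theorem~\ref{complete_classification} to reduce both $\varphi$ and $\psi$ to the normal form $\frac{x^3+ax}{bx^2+1}$ with $ab=9$ and $-b$ a non-square, and then link any two such normal forms via an explicit pair of scalar M\"obius substitutions. Concretely, given $\varphi=\frac{x^3+ax}{bx^2+1}$ and $\psi=\frac{x^3+a'x}{b'x^2+1}$, I would compose $\varphi$ on the right by $x\mapsto\mu x$ and on the left by $x\mapsto\lambda x$; this produces $\frac{\lambda\mu^3 x^3+\lambda a\mu x}{b\mu^2 x^2+1}$, and imposing $\lambda\mu^3=1$ together with $b\mu^2=b'$ makes the output coincide with $\psi$, because the middle coefficients then agree automatically on account of $ab=a'b'=9$. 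The only subtlety is the existence of $\mu\in\vF_q$ with $\mu^2=b'/b$: this follows from the fact that both $-b$ and $-b'$ being non-squares forces $bb'$ to be a square, so $b'/b=bb'/b^2$ is a square in $\vF_q^\times$.

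For part (2), I would first translate inseparability of $\varphi=f/g$ (with $\gcd(f,g)=1$) of degree $3$ into explicit information on $f$ and $g$. The minimal polynomial of $x$ over $\vF_q(\varphi)$ is a scalar multiple of $f(X)-\varphi\,g(X)$, so inseparability forces $f'(X)-\varphi\,g'(X)=0$ in $\vF_q(\varphi)[X]$, and hence $f'=g'=0$; combined with $\deg\varphi=3$ this gives $3\mid q$ and $f,g\in\vF_q[x^3]$. Writing $f(x)=F(x^3)$ and $g(x)=G(x^3)$ with $F,G\in\vF_q[x]$ of degree at most one, the pair $F,G$ remains coprime (any common root $\alpha$ of $F,G$ would lift to a common root of $f,g$ via a cube root of $\alpha$ in $\overline{\vF}_q$), and the non-constancy of $\varphi$ ensures $h\coloneqq F/G$ is a genuine M\"obius transformation. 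Thus $\varphi=h\circ x^3\sim x^3$.

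The converse direction of (2) follows from the observation that the equivalence relation $\sim$ preserves separability of the extension $\vF_q(x)/\vF_q(\varphi)$: right-composition by a M\"obius transformation is an automorphism of $\vF_q(x)$, while left-composition by $m_1\in\mathcal M$ leaves $\vF_q(\varphi)=\vF_q(m_1\circ\varphi)$ invariant, so the extension is unchanged up to isomorphism. Hence $\varphi\sim x^3$ together with $3\mid q$ forces $\varphi$ inseparable, since $x\mapsto x^3$ generates a purely inseparable extension in characteristic $3$. I expect the main conceptual point to be spotting that diagonal M\"obius substitutions already suffice in part (1), and noticing that the non-square conditions on $-b$ and $-b'$ are precisely what allows the extraction of the square root $\mu$; everything else is straightforward bookkeeping on top of Theorem~\ref{complete_classification}.
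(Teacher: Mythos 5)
Your part (1) is correct and is essentially the paper's own argument: after the reduction via Theorem \ref{complete_classification}, the paper likewise uses diagonal substitutions ($\frac{x}{k^3}\circ\psi\circ kx=\varphi$ with $b=b'k^2$), the square root existing because the quotient of two non-squares is a square in $\vF_q^*$ for odd $q$. Your forward implication in part (2) is also fine, and in fact somewhat more careful than the paper's: arguing via $f'(X)-\varphi g'(X)=0$ and the transcendence of $\varphi$ covers the polynomial case $\deg g=0$ as well, whereas the paper writes $f=x^3+a$, $g=x^3+b$ outright.

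However, your ``converse direction'' of (2) proves the wrong implication. What must be shown there is: if $3\mid q$ and $\varphi\sim x^3$ (with $\varphi$ inseparable of degree $3$ as the standing hypothesis), then $\varphi$ \emph{is a permutation} of $\vP^1(\vF_q)$. Instead you show that $3\mid q$ and $\varphi\sim x^3$ force $\varphi$ to be inseparable --- but inseparability is already assumed in the statement, so this adds nothing, and nowhere in your proposal is it verified that $\varphi$ actually permutes $\vP^1(\vF_q)$. The missing step is elementary: in characteristic $3$ the map $x\mapsto x^3$ is injective on $\vF_q$ (if $a^3=b^3$ then $(a-b)^3=0$) and fixes $\infty$, hence it is a bijection of $\vP^1(\vF_q)$; M\"obius transformations are bijections of $\vP^1(\vF_q)$, so composing on either side preserves the permutation property and any $\varphi\sim x^3$ is again a permutation rational function. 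This is exactly the paper's one-line ``Conversely, if $3\mid q$ and $\varphi$ is equivalent to $x^3$, then clearly $\varphi$ is permutation.'' Add this and your proof of (2) is complete; your observation that $\sim$ preserves separability is true but not needed for this corollary.
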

\begin{proof}
 (1) By Theorem \ref{complete_classification}, we can assume that 
 \[ \varphi=\frac{x^3+ax}{bx^2+1}\quad \text{and} \quad \psi=\frac{x^3+a'x}{b'x^2+1}\] for some $a,a',b,b'\in \vF_q$ with $bb'\neq 0$, $-b,-b'\notin\vF_q^2$ and $ab=a'b'=9$. Thus there exists $k\in \vF_q^*$ such that $b=b'k^2$. It follows immediately that \[\frac{x}{k^3}\circ\psi\circ kx=\varphi.\]
 
 (2) Let $\varphi=f/g$. Since $\deg\varphi=3$, $\varphi$ is inseparable if and only if $3\mid q$ and $f,g$ are both inseparable polynomials of degree 3. This means that $f=x^3+a$ and $g=x^3+b$ for some $a,b\in \vF_q$. But then 
 \[\varphi=\left(\frac{x+a}{x+b}\right)^3,\] 
 which is of course equivalent to $x^3$. Conversely, if $3\mid q$ and $\varphi$ is equivalent to $x^3$, then clearly $\varphi$ is permutation.
\end{proof}

\begin{corollary}\label{number_of_permutations}
 Let $N_q$ be the number of rational functions of degree 3 of the form $f/g$, where $f,g\in \vF_q[x]$ are monic coprime polynomials.
 \begin{enumerate}
  \item If $3\nmid q$, let $m\in\{1,2\}$ be the residue class of $q$ modulo 3. Then:
  $$N_q=\frac{1}{2}(q^4+2(m-1)q^3+(2m-3)q^2).$$
  \item If $3\mid q$ then:
  $$N_q=\frac{1}{2}(q^4+q^3+q^2+q).$$
 \end{enumerate}
 \begin{proof}
We will prove (1) and (2) at the same time. For $s,t\in\vN$, let $R_{s,t}$ be the set of permutation rational functions in $\vF_q(x)$ of the form $f/g$, where $f,g\in \vF_q[x]$ are monic, coprime polynomials such that $\deg f=s$ and $\deg g=t$. Since $\varphi$ is permutation if and only if $1/\varphi$ is permutation, we see immediately that $N_q=|R_{3,3}|+2|R_{3,2}|+2|R_{3,0}|$ (notice that $R_{3,1}=\emptyset$).
  
  Let us start by computing $|R_{3,0}|$. This is just the number of monic permutation polynomials, and it is easy to check that it is $(m-1)q^2$ if $3\nmid q$ and $(q^2+q)/2$ otherwise.
  
  In order to compute $|R_{3,2}|$, we first compute $|R_{3,2}'|$, where $R_{3,2}'\subseteq R_{3,2}$ is the set of elements of the form $\displaystyle \frac{x^3+a_2x^2+a_1x+a_0}{x^2+b_0}$. Composing on the left with $x-a_0/b_0$ and using Theorem \ref{complete_classification}, one checks that such elements are permutation if and only if $-b_0\notin \vF_q^2$, $a_1/b_0=9$ and $a_2=a_0/b_0$. This shows that the choices of $a_0,b_0$ determine the function completely. Therefore, $|R_{3,2}'|=q(q-1)/2$. Now consider the following map:
  $$\pi\colon R_{3,2}\rightarrow R_{3,2}'$$
  $$\varphi=\frac{x^3+a_2x^2+a_1x+a_0}{x^2+b_1x+b_0}\mapsto \varphi\circ \left(x-\frac{b_1}{2}\right).$$
  One checks that $\pi$ is surjective and its fibers have cardinality $q$. This proves that $|R_{3,2}|=q^2(q-1)/2$.
  
  The remaining step is to compute $|R_{3,3}|$. Let first $R_{3,3}'\subseteq R_{3,3}$ be the subset of functions of the form 
  \[\varphi=\frac{x^3+a_2x^2+a_1x+a_0}{x^3+b_2x^2+b_1x+b_0}\quad \text{with} \quad b_2\neq a_2.\] Again, one defines a function
  $$\pi\colon R_{3,3}'\rightarrow R_{3,2}$$
  $$\varphi\mapsto \frac{(a_2-b_2)x}{x-1}\circ\varphi$$
  and checks that this is surjective with fibers of cardinality $q-1$. It follows that $|R_{3,3}'|=q^2(q-1)^2/2$.
  
  Finally, we need to compute $|R_{3,3}\setminus R_{3,3}'|$. Let 
  \[ \varphi=\frac{x^3+a_2x^2+a_1x+a_0}{x^3+a_2x^2+b_1x+b_0}\in R_{3,3}\setminus R_{3,3}'.\] The function \[ \frac{x}{x-1}\circ \varphi=\frac{x^3+a_2x^2+a_1x+a_0}{(a_1-b_1)x+a_0-b_0}\] is a permutation rational function of degree 3. This implies that $b_1=a_1$, $a_0\neq b_0$ and $x^3+a_2x^2+a_1x+a_0\in R_{3,0}$. Conversely, for every choice of $x^3+a_2x^2+a_1x+a_0\in R_{3,0}$ there are exactly $(q-1)$ choices of $b_0$ such that $\displaystyle \frac{x^3+a_2x^2+a_1x+a_0}{x^3+a_2x^2+a_1x+b_0}$ is permutation. It follows that $R_{3,3}\setminus R_{3,3}'=(q-1)|R_{3,0}|$.
  
 \end{proof}

\end{corollary}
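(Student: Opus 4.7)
The plan is to partition the count by the bi-degree $(s,t)=(\deg f,\deg g)$ and exploit the involution $\varphi\mapsto 1/\varphi$, which preserves both permutation-ness and monicity and gives a bijection $R_{s,t}\to R_{t,s}$, where $R_{s,t}$ denotes the subset of permutation rational functions $f/g$ with $f,g\in\vF_q[x]$ monic, coprime, and of the prescribed degrees. Since $R_{3,1}=\varnothing$ (both the root of a linear denominator and $\infty$ would be sent to $\infty$), only $R_{3,3}$, $R_{3,2}$, $R_{3,0}$ and their images under the involution contribute, yielding
\[ N_q \;=\; |R_{3,3}| + 2|R_{3,2}| + 2|R_{3,0}|. \]
I would then compute the three pieces in turn.

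The piece $|R_{3,0}|$ counts monic permutation cubics, which I would evaluate via Corollary \ref{degree_3_class} combined with a free choice of constant term. In odd characteristic with $3\nmid q$, the shift $x\mapsto x+\alpha/3$ removes the $x^2$-coefficient, reducing a generic monic cubic to $x^3+\beta'x+\gamma'$; the classification forces $\beta'=0$ and $q\equiv 2\pmod 3$, giving $|R_{3,0}|=(m-1)q^2$. When $3\mid q$ the input shift is inert on the $x^2$-coefficient but the output shift still removes the constant term, and the normalized permutations are $x^3+bx$ with $b=0$ or $-b$ not a square, so restoring a free constant term yields $|R_{3,0}|=q(q+1)/2$.

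For $|R_{3,2}|$, I would reduce to the sub-family $R_{3,2}'$ whose denominator has no $x$-term via the right-translation $\varphi\mapsto\varphi\circ(x-b_1/2)$ in odd characteristic (and the characteristic-two analogue from Section \ref{sec:even_char}), producing a surjection $R_{3,2}\twoheadrightarrow R_{3,2}'$ with fibers of size $q$. Inside $R_{3,2}'$, a left-translation kills the numerator's constant term, and Theorem \ref{complete_classification} (with its characteristic-two analogue Theorem \ref{char_2_class}) pins down $|R_{3,2}'|=q(q-1)/2$, hence $|R_{3,2}|=q^2(q-1)/2$. For $|R_{3,3}|$, I would split off the subset $R_{3,3}'$ where the $x^2$-coefficients of $f$ and $g$ disagree: left-composing by the Möbius map $(a_2-b_2)x/(x-1)$ sets up a surjection $R_{3,3}'\twoheadrightarrow R_{3,2}$ with fibers of size $q-1$ (parametrized by $a_2-b_2\in\vF_q^*$), producing $|R_{3,3}'|=q^2(q-1)^2/2$. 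On the complement, left-composition by $x/(x-1)$ collapses $\varphi$ into a function of the form $h(x)/(\alpha x+\beta)\in R_{3,2}\cup R_{3,1}$, and the classification shows the latter forces $h\in R_{3,0}$ with $q-1$ admissible choices of the remaining parameter, giving $|R_{3,3}\setminus R_{3,3}'|=(q-1)|R_{3,0}|$. Assembling the three pieces and simplifying by cases on $q\bmod 3$ will produce the stated closed formulas. The main obstacle is the bookkeeping: verifying surjectivity and fiber sizes at each Möbius reduction, checking coprimality is preserved, and uniformly handling odd and even characteristic through the respective classification theorems.
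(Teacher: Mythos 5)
Your proposal follows essentially the same route as the paper: the same decomposition $N_q=|R_{3,3}|+2|R_{3,2}|+2|R_{3,0}|$ via $\varphi\mapsto 1/\varphi$, the same M\"obius reductions with the same fiber counts ($q$ for $R_{3,2}\to R_{3,2}'$, $q-1$ for $R_{3,3}'\to R_{3,2}$), and the same treatment of the complement giving $(q-1)|R_{3,0}|$, so it is correct in substance. The only nit is that after left-composing with $x/(x-1)$ on $R_{3,3}\setminus R_{3,3}'$ the resulting denominator $(a_1-b_1)x+(a_0-b_0)$ has degree at most $1$ (not $2$), but your conclusion that the classification forces the polynomial case with $q-1$ admissible parameters is exactly the paper's argument.
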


\section{The even characteristic case}\label{sec:even_char}

Throughout this section, we let $q=2^n$ for some fixed $n\in \vN$. Let us start by recalling the following elementary lemma.
\begin{lemma}\label{quadratic_eq}
 A quadratic polynomial $x^2+bx+c\in \vF_q[x]$ with $b\neq 0$ has a root in $\vF_q$ if and only if $\displaystyle\Tr_{\vF_q/\vF_2}\left(\frac{c}{b^2}\right)=0$.
\end{lemma}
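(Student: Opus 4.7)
The plan is to reduce the quadratic to an Artin--Schreier equation and then invoke the fact that the image of the Artin--Schreier map $\wp(y)=y^2+y$ on $\vF_q$ coincides with the kernel of the absolute trace.

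First I would perform the substitution $x=by$, which is legitimate since $b\neq 0$. This turns $x^2+bx+c=0$ into $b^2y^2+b^2y+c=0$, or equivalently
\[ y^2+y+\frac{c}{b^2}=0, \]
so the original polynomial has a root in $\vF_q$ if and only if there exists $y\in\vF_q$ with $\wp(y)=c/b^2$, where $\wp(y)\coloneqq y^2+y$.

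Next I would show that $\wp\colon\vF_q\to\vF_q$ is an $\vF_2$-linear map (additivity uses that $q$ is a power of $2$), with kernel $\{0,1\}=\vF_2$, so the image has index $2$ in $\vF_q$. Then I would verify that the image is contained in the kernel of $\Tr_{\vF_q/\vF_2}$: since the Frobenius $a\mapsto a^2$ commutes with the trace,
\[ \Tr_{\vF_q/\vF_2}(\wp(y))=\Tr_{\vF_q/\vF_2}(y^2)+\Tr_{\vF_q/\vF_2}(y)=2\Tr_{\vF_q/\vF_2}(y)=0. \]
Since the trace is surjective onto $\vF_2$, its kernel also has index $2$ in $\vF_q$, and comparing dimensions over $\vF_2$ forces $\Im(\wp)=\ker(\Tr_{\vF_q/\vF_2})$.

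Combining the two steps, $c/b^2$ lies in $\Im(\wp)$ if and only if $\Tr_{\vF_q/\vF_2}(c/b^2)=0$, which is precisely the condition for $x^2+bx+c$ to have a root in $\vF_q$. There is no real obstacle here: the statement is standard Artin--Schreier theory in characteristic $2$, and the only mild care needed is to handle the substitution $x=by$ (which requires $b\neq 0$, as hypothesized) and to check carefully that $\Im(\wp)$ and $\ker(\Tr_{\vF_q/\vF_2})$ have the same $\vF_2$-codimension in $\vF_q$.
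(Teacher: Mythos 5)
Your proof is correct. Note that the paper does not prove this lemma at all: it simply cites \cite[Remark 11.1.120]{mullen2013handbook}, so you have supplied the standard self-contained argument in its place. Your route — substituting $x=by$ to reduce to the Artin--Schreier equation $y^2+y=c/b^2$, then identifying the image of the $\vF_2$-linear map $y\mapsto y^2+y$ with the kernel of $\Tr_{\vF_q/\vF_2}$ by comparing indices — is the classical proof, and in fact the same image-versus-kernel counting argument appears verbatim inside the paper's proof of Corollary \ref{cor:equivalence_char_2}, so your argument is fully consistent with the tools the authors themselves use. All steps check out: $b\neq 0$ justifies the substitution, the kernel of $y\mapsto y^2+y$ is $\{0,1\}$, containment of the image in the trace kernel follows from $\Tr_{\vF_q/\vF_2}(y^2)=\Tr_{\vF_q/\vF_2}(y)$, and surjectivity of the trace gives the dimension count that forces equality.
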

\begin{proof}
 See \cite[Remark 11.1.120]{mullen2013handbook}.
\end{proof}

\begin{theorem}\label{char_2_class}
 Let $\varphi\in \vF_q(x)$ be a rational function of degree 3. The following hold:
 \begin{enumerate}
  \item if $\varphi$ is permutation, then it is equivalent to a function of the form $\displaystyle \frac{x^3+a_2x^2+a_1x}{x^2+x+b_0}$ for some $a_1,a_2,b_0\in \vF_q$ with $\Tr_{\vF_q/\vF_2}(b_0)=1$.
  \item $\varphi=\displaystyle \frac{x^3+a_2x^2+a_1x}{x^2+x+b_0}$ is permutation if and only if $\Tr_{\vF_q/\vF_2}(b_0)=1$, $\displaystyle a_1=b_0+\frac{1}{b_0}$ and $\displaystyle a_2=1+\frac{1}{b_0}$.
 \end{enumerate}
\end{theorem}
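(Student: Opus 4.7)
The strategy parallels the proof of Theorem \ref{complete_classification}, but uses condition (3) of Lemma \ref{discriminant} (which survives in characteristic $2$) in place of condition (4).

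\textbf{Normalization (part (1)).} First I would reduce $\varphi=f/g$ by M\"obius transformations, following the template used in the odd-characteristic case. After composing with $1/x$ on the left if necessary, I may assume $\deg f=3$. The case $\deg g=1$ is immediately excluded, since the root of $g$ and $\infty$ (which is fixed because $\deg f>\deg g$) would be two distinct preimages of $\infty$. The case $\deg g=3$ reduces to $\deg g=2$ by composing on the left with $y/(y-\alpha)$, where $\alpha$ is the ratio of leading coefficients of $f$ and $g$. The polynomial case $\deg g=0$ is handled by a composition analogous to the one in Theorem \ref{complete_classification}; for instance $\frac{y}{y+1}\circ x^3 \circ \frac{x}{x+1}=\frac{x^3}{x^2+x+1}$, which already has the desired shape. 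So I may assume $\deg g=2$. Writing $g=x^2+c_1x+c_0$ after scaling, the coefficient $c_1$ must be nonzero: otherwise in characteristic $2$ one has $g=(x+\sqrt{c_0})^2$, contradicting injectivity of $\varphi$ at $\infty$. The substitution $x\mapsto c_1 x$ then brings $g$ to $x^2+x+b_0$, and the absence of roots of $g$ in $\vF_q$ forces $\Tr_{\vF_q/\vF_2}(b_0)=1$ by Lemma \ref{quadratic_eq}. Finally, composing on the left by $y+a_0/b_0$ kills the constant term of the numerator, giving the form $\frac{x^3+a_2x^2+a_1x}{x^2+x+b_0}$.

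\textbf{Quadratic resolvent and Artin--Schreier translation.} For part (2) I would compute explicitly the quadratic resolvent $R_2(x)=x^2+B(t)x+C(t)$ of $f-tg$ with $f=x^3+a_2x^2+a_1x$ and $g=x^2+x+b_0$, using the defining formula specialized to characteristic $2$ (where $3c=c$, $9c^2=c^2$ and $6abc=0$); the polynomials $B,C\in \vF_q[t]$ come out of degrees $2$ and $4$ respectively, with leading coefficient of $C$ equal to $b_0$. In characteristic $2$, a quadratic $x^2+Bx+C$ over a field $K$ (with $B\neq 0$) is reducible iff $C/B^2\in \wp(K)$, where $\wp(y)=y^2+y$ is the Artin--Schreier map. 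A direct computation using a generator $\omega$ of $\vF_{q^2}/\vF_q$ with $\omega^2+\omega=\delta$ for some $\delta\in \vF_q$ of absolute trace $1$ shows $\wp(\vF_{q^2}(t))\cap \vF_q(t)=\wp(\vF_q(t))\cup(\delta+\wp(\vF_q(t)))$. Thus condition (3) of Lemma \ref{discriminant} translates to: there exist $u\in \vF_q(t)$ and $\delta\in \vF_q$ with $\Tr(\delta)=1$ such that $C/B^2=\delta+u^2+u$, with $C/B^2\notin \wp(\vF_q(t))$.

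\textbf{Extracting the equations (part (2)).} An inspection of the poles and the polynomial part forces $u=Q+P/B$ with $Q\in \vF_q$ and $P\in \vF_q[t]$ of degree at most $1$. Substituting and clearing denominators converts the Artin--Schreier condition into the polynomial identity $C+\delta B^2=(Q^2+Q)B^2+P^2+PB$. Matching the coefficients of $t^j$ for $j=4,3,2,1,0$ produces five equations. The $t^4$ coefficient reads $Q^2+Q=b_0+\delta$, solvable in $\vF_q$ exactly because $\Tr(b_0)=\Tr(\delta)=1$, and the same trace condition simultaneously ensures $C/B^2\notin \wp(\vF_q(t))$ (since the analogous solvability $R^2+R=b_0$ would require $\Tr(b_0)=0$), i.e.\ the irreducibility half of the hypothesis. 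The $t^3$ coefficient gives $p_1=b_0a_2+1$, the $t^2$ coefficient then determines $p_0$ uniquely in terms of $a_1,a_2,b_0$, and the remaining $t^1$ and $t^0$ equations become two consistency conditions purely on $(a_1,a_2,b_0)$. Simplifying these (using $b_0\ne 0$ and discarding the trivial branches analogous to the spurious factor $(a_1b_2-1)$ in the odd-characteristic proof) yields simultaneously $a_1=b_0+1/b_0$ and $a_2=1+1/b_0$.

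\textbf{Main obstacle and small cases.} The hardest step is the simplification of the two final consistency equations, which are \emph{a priori} cubic in $a_1,a_2,b_0$: one must factor them carefully and rule out spurious branches to reach the compact closed form above. By Lemma \ref{lemma_control_C} only the implication (1)$\Rightarrow$(3) of Lemma \ref{discriminant} requires $q\ge 13$; sufficiency of the stated conditions holds for all $q$ by Remark \ref{sufficient_condition}, and the remaining finite set of cases $q\in\{2,4,8\}$ can be verified by direct computation with a computer algebra system, mirroring the tail of the proof of Theorem \ref{complete_classification}.
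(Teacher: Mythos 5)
Your part (1) follows the paper's normalization essentially verbatim, and the skeleton of your part (2) (condition (3) of Lemma \ref{discriminant} for $q\geq 13$ via Lemma \ref{lemma_control_C}, sufficiency for all $q$ by Remark \ref{sufficient_condition}, a machine check for $q\in\{2,4,8\}$) is also the paper's. The genuinely different move is your Artin--Schreier translation of condition (3): the paper instead factors $R_2$ over $\vF_{q^2}(t)$ into two conjugate quadratics in $t$ and solves the resulting eight norm/trace equations \eqref{spade} by a Magma primary decomposition with a separate case $a_1a_2=0$. Your reduction is correct: $\wp(\vF_{q^2}(t))\cap\vF_q(t)=\wp(\vF_q(t))\cup(\delta+\wp(\vF_q(t)))$, the pole analysis forcing $u=Q+P/B$ with $Q\in\vF_q$ and $\deg P\leq 1$ is valid, and after the $t^4$ coefficient (which gives $\wp(Q)+\delta=b_0$, i.e.\ exactly $\Tr_{\vF_q/\vF_2}(b_0)=1$) the condition collapses to the polynomial identity $C+b_0B^2=P^2+PB$ over $\vF_q$. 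This encoding is arguably tighter than the paper's pair $F,G$: it requires no division by $a_1a_2$, and a direct check shows that the troublesome component $a_1+a_2+b_0=a_2^2b_0+a_2+b_0=0$ of the paper's ideal $(F,G)$ (which the paper can only discard by invoking the trace condition) already fails your $t^0$ equation.

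The gap is that the decisive step of part (2) is asserted, not carried out. After substituting $p_1=a_2b_0+1$ and the value of $p_0$ from the $t^2$ coefficient (note that $p_0$ already has degree $4$ in $(a_1,a_2,b_0)$), the two remaining consistency equations have degree up to $5$ and $8$, not ``cubic'' as you state, and their common zero locus is strictly larger than the claimed family: for instance $a_1=b_0$, $a_2=1$ satisfies both equations, corresponding to numerator $x(x^2+x+b_0)$, i.e.\ the degenerate map $x$ -- the exact analogue of the spurious branch $a_1b_2=1$ in odd characteristic and of the paper's component $p_6$ -- and you give no argument that this (together with loci violating $b_0\neq 0$ or coprimality) exhausts the extra components. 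In the paper the corresponding elimination is precisely where the work lies: a Gr\"obner/primary decomposition into eight primes followed by a case analysis using $a_1a_2b_0\neq 0$, the degree-$3$ condition, and $\Tr_{\vF_q/\vF_2}(b_0)=1$. Until you perform the analogous factorization of your two equations (by hand or by machine) and exclude every spurious component, your argument establishes only the sufficiency of the stated conditions (which you do verify correctly, and which already holds by Remark \ref{sufficient_condition}), not the necessity, which is the substance of part (2).
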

\begin{proof}
Let $f,g\in \vF_q[x]$ be coprime polynomials such that $\varphi=f/g$.

\begin{claim} $\varphi$ is equivalent to a rational function of the form $f'/g'$ where $f',g'\in\vF_q[x]$ are such that $\deg f'=3$ and $\deg g'=2$. 
\end{claim}
\begin{proof}[Proof of the claim] Up to composing with $1/x$ on the left, we can assume that $\deg f=3$ and $\deg g\leq 3$. 
 
If $\deg g=2$ there is nothing to prove. We cannot have $\deg g=1$ because the denominator cannot have any root $r$ as otherwise $\infty$ has two preimages: $r$ and $\infty$. If $\deg g=0$, then $\varphi$ is a polynomial and so it is equivalent to $x^3+ax$ for some $a\in \vF_q$. Notice that $a\neq 1$ because otherwise $\varphi(0)=\varphi(1)$. Composing on the right with $\frac{x}{x+1}$ and on the left with $1/x$ we get that $\varphi$ is equivalent to 
\[\varphi'=\frac{x^3+x^2+x+1}{(1+a)x^3+ax+1}.\] 
Now composing on the left with $\displaystyle \frac{x}{x+1/(1+a)}$ we get a function of the form $\displaystyle \frac{x^3+x^2+x+1}{x^2+bx+c}$ for some $b,c\in \vF_q$. 
 
Finally, if $\deg g=3$, up to rescaling the whole function we can assume that $g$ is monic. Now composing $\varphi$ on the left with $\displaystyle \frac{x}{x+1}$ yields a function of the form $f/g'$ with $\deg g'\leq 2$.
\end{proof}
 
 Let then $\displaystyle \frac{x^3+a_2'x^2+a_1'x+a_0'}{b_2'x^2+b_1'x+b_0'}$ be a permutation rational function, where $b_2'\neq 0$. Up to composing on the left by $b_2'x$, we can assume that $b_2'=1$. Now notice that $b_1'\neq 0$, since otherwise the denominator would have a root. Composing with $b_1'x$ on the right and composing on the left with $b_1'^2$, we get a function of the form $\displaystyle \frac{x^3+a_2''x^2+a_1''x+a_0''}{x^2+x+b_0''}$. Again, $b_0''\neq 0$, so composing on the left by $x+a_0''/b_0''$ and using Lemma \ref{quadratic_eq} we prove (1).
 
 In order to prove (2), we want to use part (3) of Lemma \ref{discriminant}, so let us assume that $q\geq 13$. The quadratic resolvent of $x^3+a_2x^2+a_1x-t(x^2+x+b_0)$ is given by:
 $$R_2(x)=x^2+(t^2+(a_1+a_2+b_0)t+a_1a_2)x+b_0t^4+(a_2b_0+1)t^3+(a_1+a_2^2b_0+b_0^2)t^2+(a_1^2+a_2^3b_0)t+a_1^3.$$
 In order for $R_2(x)$ to be irreducible over $\vF_q(t)$ but not over $\vF_{q^2}(t)$, it is necessary and sufficient that there exist $u_0,u_1,u_2\in \vF_{q^2}$, not all of which in $\vF_q$, such that
 $$R_2(x)=(x-(u_2t^2+u_1t+u_0))(x-(u_2^qt^2+u_1^qt+u_0^q)).$$
 This yields the following system of equations:
 
 \begin{equation}\label{spade}
\begin{cases}
    \Tr(u_0)=a_1a_2 & \\
    \Tr(u_1)=a_1+a_2+b_0 & \\
    \Tr(u_2)=1 & \\
    \N(u_0)=a_1^3 & \\
    \N(u_2)=b_0 & \\
    u_0u_1^q+u_0^qu_1=a_1^2+a_2^3b_0 & \\
    u_0u_2^q+\N(u_1)+u_0^qu_2=a_1+a_2^2b_0+b_0^2 & \\
    u_1^qu_2+u_1u_2^q=a_2b_0+1 & \\
   \end{cases},
\end{equation}
where all involved norms and traces are $\vF_{q^2}\to\vF_q$.
Let us now assume that $\varphi$ is a permutation and prove that the solutions to the equations \eqref{spade} satisfy $\Tr_{\vF_q/\vF_2}(b_0)=1$, $ a_1=b_0+1/b_0$ and $ a_2=1+1/b_0$.  
Let us start by assuming that $a_1a_2\neq 0$. Substituting the first three equations of \eqref{spade} into the last three, we can compute the norm of $u_1$ and we get two $\vF_q$-linear relations involving $u_0$ and $u_2$:
  $$\begin{cases}
     N(u_1)=\frac{a_1^5 + a_1^4a_2 + a_1^4 + a_1^3a_2b_0 + a_1^3b_0^2 + a_1^2a_2^4b_0 + a_1a_2^5b_0 + a_1a_2^4b_0^2 + a_2^6b_0^2}{(a_1a_2)^2} & \\
     (a_1+a_2+b_0)u_0+(a_1+a_2+b_0)a_1a_2u_2=a_1^2+a_2^3b_0+(a_2b_0+1)a_1a_2 & \\
     (a_1+a_2+b_0)u_0+(a_1+a_2+b_0)a_1a_2u_2=(a_1+a_2+b_0)(a_1+a_2^2b_0+b_0^2+N(u_1))& \\
    \end{cases},$$
   which yield a first relation among $a_1,a_2,b_0$:
   $$F:=(a_1^2 + a_1a_2 + a_1b_0 + a_2^2b_0)(a_1^4 + a_1^3a_2 + a_1^3 + a_1^2a_2^2b_0 + a_1^2a_2b_0 + a_1^2b_0^2 + a_1a_2^4b_0 + a_2^5b_0 + a_2^4b_0^2)=0$$
   Now the last equation of \eqref{spade} yields a linear relation between $u_1$ and $u_2$, namely $u_1=(a_1+a_2+b_0)u_2+a_2b_0+1$. Substituting this in the relation $u_1^2+\Tr(u_1)u_1+N(u_1)=0$ gives another relation among $a_1,a_2,b_0$:
   $$G:=(a_1 + a_2^2b_0)(a_1^4 + a_1^3a_2 + a_1^3 + a_1^2a_2^2b_0 + a_1^2a_2^2 + a_1^2a_2b_0 + a_1^2b_0^2 + a_1a_2^3 + a_1a_2^2b_0 + a_1a_2^2 + a_2^4b_0)=0.$$
   Now using Magma \cite{MR1484478}, we could find the prime components of the ideal $(F,G)\subseteq\vF_2[a_1,a_2,b_0]$. Let us list them below.
   $$\begin{cases}
    p_1=(a_1^2 + a_1 + b_0^2,a_2) & \\
    p_2=(a_1 + a_2 + b_0 + 1,a_2b_0 + b_0 + 1) & \\
    p_3=(a_1 + a_2 + b_0,a_2^2b_0 + a_2 + b_0) & \\
    p_4=(a_1 + a_2 + 1,b_0) & \\
    p_5=(a_1 + a_2,b_0) & \\
    p_6=(a_1 + b_0,a_2+1) & \\
    p_7=(a_1,a_2) & \\
    p_8=(a_1,b_0) & \\
   \end{cases},$$
By the assumption that $a_1a_2\neq 0$ (and the fact that $b_0\neq 0$ as we assumed that $\varphi$ to be a permutation)  we have that $a_1a_2b_0\neq 0$, therefore we can easily exclude $p_1,p_4,p_5,p_7,p_8$ from the list of necessary vanishing conditions. The vanishing of the generators of $p_6$ would imply $\varphi$ to have degree 1. If the generators of $p_3$ vanish, in particular the equation $b_0x^2+x+b_0$ has a root (equal to $a_2$) over $\vF_q$. By Lemma \ref{quadratic_eq}, this implies $\Tr_{\vF_q/\vF_2}(b_0^2)=0$, but of course $\Tr_{\vF_q/\vF_2}(b_0^2)=\Tr_{\vF_q/\vF_2}(b_0)$, and the latter is $1$ by hypothesis. Therefore, the generators of $p_2$ must vanish, which is equivalent to ask that \[\displaystyle a_1=b_0+\frac{1}{b_0}\quad\text{and}\quad  a_2=1+\frac{1}{b_0}.\] Notice that if $b_0=1$ then $a_1=a_2=0$.
    
Let us now deal with the case $a_1a_2=0$. Since the numerator of a permutation rational function cannot have two distinct roots, we must have $a_1=a_2=0$. Then from \eqref{spade} we get the equations
$$\begin{cases}
   u_1^2+b_0u_1+b_0^2=0 & \\
   u_2^2+u_2+b_0=0 & \\
   b_0u_2+u_1=1 & \\
  \end{cases}$$
  from which we get $b_0=1$. Since $x^2+x+1$ must be irreducible, $n$ is odd and therefore $\Tr_{\vF_q/\vF_2}(1)=1$.
    
    Conversely, assume that $b_0\in \vF_q$ is such that $\Tr_{\vF_q/\vF_2}(b_0)=1$ and $\displaystyle a_1=b_0+\frac{1}{b_0}$, $\displaystyle a_2=1+\frac{1}{b_0}$. Now let $u_2$ be a root of the equation $x^2+x+b_0$. By Lemma \ref{quadratic_eq}, we have that $u_2\in \vF_{q^2}\setminus \vF_q$. Letting \[ u_0\coloneqq \frac{(b_0+1)^3}{b_0^2}u_2 \quad\text{and}\quad u_1\coloneqq u_2+b_0,\] it is straightforward to check that equations \eqref{spade} hold.
    
  When $q<13$, Remark \ref{sufficient_condition} shows that the above conditions on $b_0,a_1,a_2$ are still sufficient for $\displaystyle \frac{x^3+a_2x^2+a_1x}{x^2+x+b_0}$ to be permutation. A direct check with Magma \cite{MR1484478} shows that they are also necessary. 
\end{proof}
\begin{corollary}\label{cor:equivalence_char_2}
 Any two permutation rational functions $\varphi,\psi$ of degree 3 in $\vF_q(x)$ are equivalent.
\end{corollary}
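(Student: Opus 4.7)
By Theorem \ref{char_2_class}(1), every permutation rational function of degree $3$ in $\vF_q(x)$ is equivalent to one of the form
\[ \varphi_b(x) = \frac{x^3 + (1+1/b)x^2 + (b+1/b)x}{x^2+x+b} \]
for some $b \in \vF_q$ with $\Tr_{\vF_q/\vF_2}(b) = 1$. Since $\sim$ is an equivalence relation, it suffices to show that $\varphi_b \sim \varphi_{b'}$ for any two admissible parameters $b, b'$. The plan is to realize this equivalence using only the simplest possible M\"obius transformations, namely a pair of translations $x \mapsto x+c$ on the right and $x \mapsto x+e$ on the left.

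The guiding observation is that the denominator transforms nicely under right translations: in characteristic $2$ one has $(x+c)^2+(x+c)+b = x^2+x+(b+c^2+c)$. Since $\Tr_{\vF_q/\vF_2}(b' - b) = 0$ and the Artin--Schreier map $c \mapsto c^2+c$ surjects onto $\ker \Tr_{\vF_q/\vF_2}$, I can choose $c \in \vF_q$ with $b' = b + c^2 + c$. With this choice, substituting $x \mapsto x+c$ into $\varphi_b$ already produces the desired denominator $x^2+x+b'$, while the numerator becomes some cubic $N(x)$ depending on $b$ and $c$.

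The remaining task is to convert $N(x)$ into the numerator of $\varphi_{b'}$. Composing on the left with $x \mapsto x+e$ replaces $\varphi_b(x+c)$ by $\varphi_b(x+c)+e$, that is, replaces $N(x)$ by $N(x)+e\cdot(x^2+x+b')$ without altering the denominator. So I need a single $e \in \vF_q$ simultaneously satisfying three coefficient equations (from $x^2$, $x$, and the constant term). I anticipate the correct choice is $e = c + 1/b + 1/b'$; the $x^2$-coefficient equation forces this value, the $x$-coefficient equation then holds thanks to $b' - b = c^2+c$, and the constant-term equation follows by substituting and using the same relation.

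The only real obstacle is that three linear conditions on a single unknown $e$ must be compatible, and this compatibility rests entirely on the characteristic-$2$ identity $b' = b + c^2 + c$ (together with cancellations such as $1+1=0$). Once this short calculation is verified, the maps $m_1(x) = x+e$ and $m_2(x) = x+c$ are M\"obius transformations with $m_1 \circ \varphi_b \circ m_2 = \varphi_{b'}$, proving $\varphi_b \sim \varphi_{b'}$ and hence the corollary.
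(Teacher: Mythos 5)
Your proposal is correct and matches the paper's proof in all essentials: reduction to the canonical representatives via Theorem \ref{char_2_class}, the choice of $c$ with $b' = b + c^2 + c$ coming from surjectivity of the Artin--Schreier map $c\mapsto c^2+c$ onto the trace-zero elements, and then a right translation $x\mapsto x+c$ together with a left translation; moreover, your anticipated value $e = c + 1/b + 1/b'$ really does satisfy all three coefficient equations once $b' = b + c^2 + c$ is substituted, so the computation you deferred does go through. The only (minor) difference is that the paper avoids this computation altogether: after the left translation killing the numerator's constant term, $\varphi_b(x+c)$ is still a permutation of the normalized shape $\frac{x^3+ux^2+vx}{x^2+x+b'}$, so the uniqueness in part (2) of Theorem \ref{char_2_class} forces $u = 1 + 1/b'$ and $v = b' + 1/b'$ without any explicit verification.
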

\begin{proof}
 Let $b_0,b_0'\in \vF_q$ be such that $\Tr_{\vF_q/\vF_2}(b_0)=\Tr_{\vF_q/\vF_2}(b_0')=1$. We claim that there exists $c\in \vF_q$ such that $b_0'=c^2+c+b_0$. Since the additive group of elements of trace zero has index two inside $\vF_q$, it is enough to show that every element of norm 0 can be written as $c^2+c$ for some $c\in \vF_q$. To do this, consider the $\vF_2$-linear map $y\mapsto y^2+y\colon \vF_q\to \vF_q$. The kernel of such map has $\vF_2$-dimension 1, so its image has dimension $n-1$. All elements in the image have trace 0, because $\Tr_{\vF_q/\vF_2}(z^2)=\Tr_{\vF_q/\vF_2}(z)$ for all $z\in \vF_q$. But there are exactly $2^{n-1}$ elements of trace 0, and the claim is proven.
 
 By Theorem \ref{char_2_class}, we can assume that 
 \[\displaystyle\varphi=\frac{x^3+a_2x^2+a_1x}{x^2+x+b_0}\quad \text{and}\quad  \psi=\frac{x^3+a_2'x^2+a_1'x}{x^2+x+b_0'}\] for some $a_1,a_2,b_0,a_1',a_2',b_0'\in \vF_q$. Notice that, again by point (2) of Theorem \ref{char_2_class}, if $b_0=b_0'$ then $\varphi=\psi$. Let $c\in \vF_q$ be such that $b_0'=c^2+c+b_0$. Now consider $\varphi(x+c)$. Up to a translation on the left, this has the form $\displaystyle\frac{x^3+ux^2+vx}{x^2+x+b_0'}$ for some $u,v\in \vF_q$. But then necessarily $u=a_2'$ and $v=a_1'$.
\end{proof}
\begin{corollary}
 Let $N_q$ be the number of rational functions of degree 3 of the form $f/g$, where $f,g\in \vF_q[x]$ are monic coprime polynomials. Let $m\in \{1,2\}$ be the residue class of $q$ modulo $3$. Then:
  $$N_q=\frac{1}{2}(q^4+2(m-1)q^3+(2m-3)q^2).$$
\end{corollary}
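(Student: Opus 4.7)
The plan is to closely mirror the proof of Corollary \ref{number_of_permutations} and adapt each step to characteristic two. For $s,t\in\vN$, let $R_{s,t}$ denote the set of permutation rational functions $f/g\in\vF_q(x)$ with $f,g$ monic coprime of degrees $s,t$. Since $R_{3,1}=\emptyset$ (a linear denominator with a $\vF_q$-root would furnish $\infty$ with a second preimage) and $\varphi\mapsto 1/\varphi$ is an involution on permutation rational functions, $N_q=|R_{3,3}|+2|R_{3,2}|+2|R_{3,0}|$. To count $R_{3,0}$: if $m=2$ (so $n$ is odd), cubing is a bijection and affine normalisation shows every monic permutation cubic has the form $x^3+\alpha x^2+\alpha^2 x+\beta$ for some $\alpha,\beta\in\vF_q$, giving $q^2$; if $m=1$ (so $n$ is even), factoring $f(x)-f(y)=(x-y)g(x,y)$ and substituting $u=x+y,\ v=xy$ reduces the permutation condition to $\Tr_{\vF_q/\vF_2}(1+a_2/u+a_1/u^2)=1$ for all $u\in\vF_q^*$; summing over $u$ and using $\sum_{u\in\vF_q^*}u^{-k}=0$ for $q\geq 4$ and $k\in\{1,2\}$ yields $0=1$ in $\vF_2$ (as $q-1$ is odd), a contradiction, so $|R_{3,0}|=0$. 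In either case, $|R_{3,0}|=(m-1)q^2$.

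To count $|R_{3,2}|$, I define $R_{3,2}'\subseteq R_{3,2}$ as those functions with denominator of the form $x^2+x+b_0$. In characteristic 2, an irreducible quadratic $x^2+b_1 x+b_0$ necessarily has $b_1\neq 0$ (otherwise $g=(x+b_0^{1/2})^2$ splits over $\vF_q$), so the rescaling $\pi\colon\varphi(x)\mapsto b_1^{-1}\varphi(b_1 x)$ is a well-defined surjection $R_{3,2}\twoheadrightarrow R_{3,2}'$ with fibres of size $q-1$. To count $R_{3,2}'$, composing on the left with the translation $x+A_0/B_0$ kills the constant term of the numerator of an element $\frac{x^3+A_2 x^2+A_1 x+A_0}{x^2+x+B_0}$, bringing it into the shape covered by Theorem \ref{char_2_class}(2); the permutation condition then becomes $\Tr_{\vF_q/\vF_2}(B_0)=1$, $A_1+A_0/B_0=B_0+1/B_0$ and $A_2+A_0/B_0=1+1/B_0$. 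Parameterising by $B_0$ (there are $q/2$ valid choices) and $A_0\in\vF_q$ determines $A_1,A_2$; coprimality of numerator and denominator is automatic (the alternative would force $1=0$), so $|R_{3,2}'|=q^2/2$ and $|R_{3,2}|=q^2(q-1)/2$.

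For $|R_{3,3}|$, write $\varphi=\frac{x^3+a_2 x^2+\cdots}{x^3+b_2 x^2+\cdots}$ and split $R_{3,3}=R_{3,3}'\sqcup(R_{3,3}\setminus R_{3,3}')$ according to whether $a_2+b_2\neq 0$. The map $R_{3,3}'\to R_{3,2}$, $\varphi\mapsto\frac{(a_2+b_2)x}{x+1}\circ\varphi$, is a surjection with fibres of size $q-1$ (indexed by $t=a_2+b_2\in\vF_q^*$), giving $|R_{3,3}'|=q^2(q-1)^2/2$. For the complement (where $a_2=b_2$), composing with $\frac{x}{x+1}$ on the left forces the coefficients of $x$ in the numerator and denominator to coincide (else a degree-1 denominator with a $\vF_q$-root appears, obstructing injectivity at $\infty$), and reduces $\varphi$ to a scalar multiple of some $f\in R_{3,0}$ over a nonzero constant; conversely each pair $(f,b_0)$ with $f\in R_{3,0}$ and $b_0\neq a_0$ yields a valid element, so $|R_{3,3}\setminus R_{3,3}'|=(q-1)|R_{3,0}|$. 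Assembling gives $N_q=\tfrac12 q^2(q+1)(q+2m-3)$, which expands to the claimed formula. The principal novelty compared with the odd case is the denominator normalisation in $R_{3,2}$: since translations cannot kill the linear term of $g$ in characteristic 2, I normalise multiplicatively instead, which changes the fibre bookkeeping but leaves the overall strategy intact.
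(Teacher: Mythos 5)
Your proposal is correct and follows essentially the same route as the paper: the same decomposition $N_q=|R_{3,3}|+2|R_{3,2}|+2|R_{3,0}|$, the same normalized subsets $R_{3,2}'$ and $R_{3,3}'$ with the same fiber counts, and indeed the multiplicative rescaling $\varphi\mapsto b_1^{-1}\varphi(b_1x)$ that you present as the novelty is exactly the map used in the paper's proof. The extra details you supply (the explicit determination of $|R_{3,0}|$, including the trace-summation argument ruling out permutation cubics when $q\equiv 1\bmod 3$, and the coprimality check in $R_{3,2}'$) are steps the paper leaves to the reader, and they are handled correctly.
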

\begin{proof}
 The proof is essentially the same as that of Corollary \ref{number_of_permutations}. Let $R_{s,t}$ be the set of permutation rational functions of degree 3 of the form $f/g$, where $f,g$ are monic polynomials of degree $s,t$, respectively. Then $N_q=|R_{3,3}|+2|R_{3,2}|+2|R_{3,0}|$. One sees immediately that $|R_{3,0}|=(m-1)q^2$.
 Let $R_{3,2}'\subseteq R_{3,2}$ be the subset of permutation rational functions of the form $\displaystyle \frac{x^3+a_2x^2+a_1x+a_0}{x^2+x+b_0}$. By composing on the left with $x+a_0/b_0$ and using Theorem \ref{char_2_class} one sees that such function is permutation if and only if $\Tr_{\vF_q/\vF_2}(b_0)=1$, $a_1=b_0+1/b_0+a_0/b_0$ and $a_2=1+1/b_0+a_0/b_0$. Thus, the choices of $a_0,b_0$ determine the function completely, and since there are $q/2$ elements of norm 1, one gets $|R_{3,2}'|=q^2/2$. Now let \[ \varphi\coloneqq\frac{x^3+a_2x^2+a_1x+a_0}{x^2+b_1x+b_0}\in R_{3,2}.\] The map $R_{3,2}\to R_{3,2}'$ defined by $\varphi\mapsto \frac{1}{b_1}\varphi(b_1x)$ is surjective with fibers of cardinality $(q-1)$.
 
 The cardinality of $R_{3,3}$ is computed in the very same way than in Corollary \ref{number_of_permutations}, and the result is the same.
\end{proof}

\section{Complete rational functions of degree three}\label{sec:ComRatDeg3}

A \emph{complete permutation polynomial} is a permutation polynomial $f\in\vF_q[x]$ such that $f+x$ is a permutation polynomial as well. See \cite{MR3622120,MR3528707} for results towards the classification of complete permutation polynomials. Construction of complete permutation polynomials are of great interest for applications (for example in the context of mutually ortogonal latin squares).
 In this section, we will show how our results naturally allow to prove that there are no complete permutation rational functions of degree three, the only exception being complete permutation polynomials over fields of characteristic 3. Corollary \ref{degree_3_class} shows that these are exactly polynomials of the form $ax^3+bx+c\in \vF_{3^n}[x]$, where $a\neq 0$ and both $-b/a$ and $-(b+1)/a$ are either 0 or non-squares in $\vF_{3^n}$.

\begin{theorem}
 Let $\varphi\in \vF_q(x)$ be a rational function of degree three. Then $\varphi$ is complete if and only if $3\mid q$ and $\varphi$ is a complete permutation polynomial.
\end{theorem}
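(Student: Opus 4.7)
My plan is to prove the theorem by narrowing the possible shape of $\varphi = f/g$ (in lowest terms, with $\deg f = 3$) and handling each case. The backward direction is immediate from the definitions.

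For the forward direction, the first step is a pole analysis: if $\varphi$ is complete, then $(\varphi+x)^{-1}(\infty)$ must be a singleton in $\vP^1(\vF_q)$, but this set automatically contains $\infty$ and every finite $\vF_q$-pole of $\varphi$. Hence $\varphi$ has no finite $\vF_q$-pole, which by injectivity of $\varphi$ forces $\varphi(\infty)=\infty$ and therefore $\deg f > \deg g$. Since $\deg g = 1$ would immediately give a root of $g$ in $\vF_q$, we are left with $\deg g \in \{0,2\}$, with $g$ irreducible over $\vF_q$ in the latter case.

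For $\deg g = 0$: by Corollary \ref{degree_3_class}, either $3 \mid q$ (giving the theorem's conclusion) or $q \equiv 2 \pmod 3$ and $\varphi = u(vx+w)^3 + z$ for some $u,v,w,z \in \vF_q$ with $uv \neq 0$. In the latter case, substituting $x \mapsto y - w/v$ and rescaling brings the normalization of $\varphi + x$ to $y^3 + (uv^3)^{-1} y$, whose nonzero linear coefficient prevents it from being a permutation polynomial by Corollary \ref{degree_3_class}; contradiction.

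For $\deg g = 2$: I use that the translations $X \mapsto X+c$ on the source and $\varphi \mapsto \varphi + d$ on the target both preserve completeness. In odd characteristic these bring $\varphi$ to the form $(X^3 + A_1 X + A_0)/(X^2 + B_0)$ with $-B_0$ a non-square; then $\varphi + x$ has the same denominator, and applying Lemma \ref{discriminant}(4) to both $\varphi$ and $\varphi + x$ requires each of the discriminants (in $X$) of $f-tg$ and of $(f+Xg)-tg$ to be a non-square in $\vF_q$ times a polynomial square in $t$. Matching coefficients of these polynomial identities in $t$, subtracting the analogous equations for $\varphi$ and $\varphi + x$, and eliminating the parameters $A_0, A_1$ yields the algebraic incompatibilities $16=0$ in characteristic $\neq 2,3$ and $B_0=0$ in characteristic $3$, both contradictions. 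In characteristic $2$, if we similarly normalize $f$ to be monic with $g$ having leading coefficient $B_2$, the argument splits on whether $B_2=1$: in this subcase the numerator of $\varphi + x$ collapses to degree $\leq 2$, and since $g$ is irreducible $\infty \in \vP^1(\vF_q)$ has no finite preimage under $\varphi + x$ while $(\varphi+x)(\infty) = B_1 \in \vF_q^*$, contradicting surjectivity; if $B_2 \neq 1$, then $\varphi + x$ has degree $3$ and one applies the quadratic resolvent condition of Lemma \ref{discriminant}(3) to both $\varphi$ and $\varphi + x$ to derive an analogous incompatibility between the resulting Artin--Schreier conditions.

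The main obstacle will be the characteristic $2$ non-polynomial subcase with $B_2 \neq 1$, which requires careful bookkeeping of the quadratic resolvent and its Artin--Schreier-type trace conditions over $\vF_q(t)$ and $\vF_{q^2}(t)$, since the clean odd characteristic discriminant comparison does not transfer directly.
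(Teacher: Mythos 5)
Your case decomposition (no finite $\vF_q$-pole, hence $\deg f=3$ and $\deg g\in\{0,2\}$ with $g$ irreducible when $\deg g=2$) and your treatment of the polynomial case follow the paper's line, but the non-polynomial cases have genuine gaps. The first is the normalization in odd characteristic: the only completeness-preserving moves you allow are $x\mapsto x+c$ at the source and $\varphi\mapsto\varphi+d$ at the target, and neither of these (nor multiplying numerator and denominator by a common scalar, which does not change $\varphi$ at all) can change the ratio of the leading coefficients of $f$ and $g$. You may arrange the denominator to be $X^2+B_0$, but the numerator is then $cX^3+A_1X+A_0$ with an arbitrary $c\in\vF_q^*$ that you cannot set equal to $1$; your coefficient matching and the announced contradictions ``$16=0$'' and ``$B_0=0$'' are computed only for $c=1$, i.e.\ only when $f$ and $g$ have equal leading coefficients (tellingly, in characteristic $2$ you do retain the analogous parameter $B_2$). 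The paper circumvents exactly this with the ``$\lambda$-complete'' device: one rescales freely at the cost of replacing ``$\varphi+x$ permutes'' by ``$\varphi+\lambda x$ permutes'', and then Theorem \ref{complete_classification}(2), applied to both $\varphi$ and $\varphi+\lambda x$, forces $a_1=b_0$ at once, contradicting coprimality of $f$ and $g$ --- no new discriminant computation is needed. Your route is repairable by carrying $c$ through the elimination, but as written it only treats a special case.

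The second gap is that the characteristic-$2$ case with $B_2\neq 1$ is not actually proved: you only assert that the quadratic-resolvent conditions of Lemma \ref{discriminant}(3) for $\varphi$ and $\varphi+x$ should be incompatible, and you yourself flag this as the main outstanding obstacle. The paper needs no new computation here either: after the $\lambda$-normalization it applies Theorem \ref{char_2_class}(2) to $\varphi$ and to $\varphi+\lambda x$ (noting $\lambda\neq 1$, since otherwise $\infty$ has no preimage), obtaining $\frac{\lambda+a_2}{1+\lambda}=a_2=1+1/b_0$, hence $a_2=1$ and $1/b_0=0$, a contradiction. Two further points: the implication ``permutation $\Rightarrow$ (3)/(4)'' of Lemma \ref{discriminant} (and Corollary \ref{degree_3_class}) is only available for $q\geq 13$, so your argument needs a separate check for small $q$, which the paper's route avoids because Theorems \ref{complete_classification} and \ref{char_2_class} hold for all $q$; and your claim that $(\varphi+x)^{-1}(\infty)$ automatically contains $\infty$ fails precisely when $\deg f=3$, $\deg g=2$ and the leading terms of $f$ and $xg$ cancel, though in that subcase the conclusions you need ($\varphi(\infty)=\infty$ and $g$ without roots in $\vF_q$) follow directly from $\deg f>\deg g$ and the bijectivity of $\varphi$, so this last one is only a patchable slip.
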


\begin{proof}

 One direction is obvious, so let us prove the converse. First, assume that $\varphi$ is a complete permutation polynomial. Notice that if $\lambda,\mu,\nu\in\vF_q$ and $\lambda\neq 0$, then $\varphi'=\lambda\varphi(x+\mu)+\nu$ is a permutation polynomial such that $\varphi'+\lambda x$ is permutation. We shall call such a rational map ``$\lambda$-complete'', for shortness. Clearly the appropriate choice of the triple $(\lambda,\mu,\nu)$ transforms $\varphi$ into a normalized $\lambda$-complete permutation polynomial, for some $\lambda\in\vF_q^*$. Corollary \ref{degree_3_class} immediately implies that $3\mid q$.
 
 Now assume that $\varphi=f/g$ is a complete permutation rational function, where $f,g\in \vF_q[x]$ are coprime and $\deg g>0$. Suppose first that $\deg f\leq \deg g$. Since $\varphi$ is permutation, there must exist a unique $t\in\vF_q$ such that $\varphi(t)=\infty$. On the other hand, $\varphi'\coloneqq\varphi+x$ is also permutation, and $\varphi'(t)=\varphi(\infty)=\infty$, which is a contradiction. It follows that $\deg f>\deg g$. Thus, $\deg f=3$ and $\deg g=2$ (one cannot have $\deg g=1$ because $\infty$ would have two pre-images via $\varphi$). As noticed above, $\lambda\varphi(x+\mu)+\nu$ is $\lambda$-complete for every $\lambda,\mu,\nu\in \vF_q$ with $\lambda\neq 0$. Therefore, choosing the triple $(\lambda,\mu,\nu)$ appropriately, we can assume that $\varphi$ is $\lambda$-complete and, if $q$ is odd, that $f=x^3+a_2x^2+a_1x$ and $g=x^2+b_0$ for some $b_0,a_1,a_2\in \vF_q$ with $-b_0$ not a square, while if $q$ is even that $f=x^3+a_2x^2+a_1x$ and $g=x^2+x+b_0$ for some $b_0,a_1,a_2\in\vF_q$ with $g$ irreducible. In the first case, we can use Theorem \ref{complete_classification} as the denominator has the degree $1$ coefficient equal zero. This forces that $a_2=0$ and $a_1/b_0=9$. Now $\varphi+\lambda x=\frac{(1+\lambda)x^3+(a_1+b_0\lambda)x}{x^2+b_0}$, and we cannot have $\lambda=-1$ because there are no permutation rational functions of degree two in odd characteristic. The same theorem shows therefore that $\frac{a_1+b_0\lambda}{1+\lambda}=a_1$, which in turn implies that $a_1=b_0$, a contradiction because $f$ and $g$ are coprime by hypothesis. If $2\mid q$, one computes $\varphi+\lambda x=\frac{(1+\lambda)x^3+(\lambda+a_2)x^2+(a_1+b_0\lambda)x}{x^2+x+b_0}$. Notice that if $\lambda=1$, then $\varphi+\lambda x$ cannot be permutation because $\infty$ would have no pre-images via $\varphi+\lambda x$. Thus $\lambda\neq 1$ and Theorem \ref{char_2_class} implies that $\frac{\lambda+a_2}{1+\lambda}=a_2=1+1/b_0$, yielding a contradiction.
\end{proof}

\section{Final remarks and further research}

Combining the results in Sections \ref{sec:degree_three}, \ref{sec:odd_char}, and  \ref{sec:even_char} we get the results in Table \ref{table_permutation_up_to_equivalence} and Table \ref{tnumber_of_permutations}. It is worth noticing that as a side result we have the following theorem.
\begin{theorem}
A degree $3$ rational function over $\vF_q$ permutes $\vP^1(\vF_q)$ if and only if it permutes infinitely many extensions of $\vF_q$.
\end{theorem}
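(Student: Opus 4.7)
The plan is to use the Galois-theoretic characterization of Lemma \ref{discriminant}(2): a separable rational function $\varphi=f/g$ of degree $3$ permutes $\vP^1(\vF_q)$ if and only if the splitting field $M$ of $f-tg$ over $\vF_q(t)$ has arithmetic Galois group $G\simeq S_3$ and geometric Galois group $N\simeq A_3$. This is a geometric statement on $N$ together with a constant-field datum on $G/N$, so the whole theorem reduces to tracking how these two ingredients behave under the extension $\vF_q\subseteq\vF_{q^n}$. Before starting I would dispose of the inseparable case: an inseparable $\varphi$ of degree $3$ exists only in characteristic $3$, where (as in the proof of Corollary \ref{cor:equivalence_odd_char}(2)) one has $\varphi\sim x^3$ already over $\vF_q$ because Frobenius is bijective on $\vF_q$, and $x^3$ permutes every $\vP^1(\vF_{q^n})$ in characteristic $3$, so both directions of the theorem are automatic.

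For the forward direction in the separable case, Lemma \ref{discriminant}(2) forces the constant field of $M$ to be $k=\vF_{q^2}$. For any odd $n$, $\vF_{q^n}\cap\vF_{q^2}=\vF_q$, so the only intermediate field $\vF_q(t)\subseteq F\subseteq\vF_{q^n}(t)$ with constant field $\vF_q$ is $\vF_q(t)$ itself; hence $M\cap\vF_{q^n}(t)=\vF_q(t)$ and $\Gal(M\cdot\vF_{q^n}(t)/\vF_{q^n}(t))$ is canonically isomorphic to $G\simeq S_3$, with geometric subgroup $A_3$. Lemma \ref{discriminant}(2) over $\vF_{q^n}$ then shows that $\varphi$ permutes $\vP^1(\vF_{q^n})$ for every odd $n$, giving infinitely many extensions.

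For the converse, assuming $\varphi$ permutes $\vP^1(\vF_{q^n})$ for infinitely many $n$, I would pick one such $n$ with $q^n\geq 13$ and apply Lemma \ref{discriminant}(2) over $\vF_{q^n}$: the arithmetic and geometric Galois groups over $\vF_{q^n}(t)$ are then $S_3$ and $A_3$ respectively. Since $N$ is intrinsic and invariant under constant extension, the geometric Galois group of $M/\vF_q(t)$ is also $A_3$. If $G$ were likewise $A_3$ (that is, if $k=\vF_q$), then the arithmetic Galois group over $\vF_{q^n}(t)$ would again equal $A_3$, contradicting the $S_3$ just obtained; hence $G\simeq S_3$. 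Finally, by Remark \ref{sufficient_condition} the implication $(2)\Rightarrow(1)$ of Lemma \ref{discriminant} holds without any size restriction on $q$, so $\varphi$ permutes $\vP^1(\vF_q)$.

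The delicate point I anticipate is keeping $N$ (geometric, base-field-independent) clearly separated from $G$ (sensitive to the constant field), and invoking the two halves of Lemma \ref{discriminant} in the right direction: the $q\geq 13$ direction to extract the group-theoretic data from permutation over a sufficiently large $\vF_{q^n}$, and the unconditional direction to transfer that data back into a permutation statement over $\vF_q$ itself.
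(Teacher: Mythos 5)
Your converse direction is sound, and your handling of the inseparable case and of the descent of the pair $(S_3,A_3)$ from $\vF_{q^n}(t)$ to $\vF_q(t)$ is exactly the kind of argument the paper has in mind. The genuine gap is in your forward direction: you start from ``$\varphi$ permutes $\vP^1(\vF_q)$'' and immediately invoke Lemma \ref{discriminant} to conclude that the pair of Galois groups is $(S_3,A_3)$, hence that the constant field of $M$ is $\vF_{q^2}$. But the implication $(1)\Rightarrow(2)$ of Lemma \ref{discriminant} is precisely the one that requires $q\geq 13$ (Remark \ref{sufficient_condition}); you were careful to respect this in the converse, where you may choose $n$ with $q^n\geq 13$, but in the forward direction the base field is $\vF_q$ itself and $q$ may be one of the small values $q<13$. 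For those $q$ your argument as written proves nothing, and the fact that no ``degenerate'' permutation rational function of degree $3$ exists below the Chebotarev threshold is not formal: in the paper it is exactly the content of the Magma verifications in Theorems \ref{complete_classification} and \ref{char_2_class}, and it genuinely fails in degree $4$ (e.g.\ $x^4+3x$ over $\vF_7$), so it cannot be waved through.

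The repair is to route the small-$q$ forward direction through the classification rather than through Lemma \ref{discriminant} directly: for every $q$ (including $q<13$), Theorem \ref{complete_classification}(2) and Theorem \ref{char_2_class}(2) show that a separable degree-$3$ permutation rational function satisfies the explicit coefficient conditions ($ab=9$ with $-b$ a non-square, resp.\ $\Tr_{\vF_q/\vF_2}(b_0)=1$ with $a_1=b_0+1/b_0$, $a_2=1+1/b_0$, or the polynomial conditions of Corollary \ref{degree_3_class}), and these conditions persist in every odd-degree extension $\vF_{q^n}$ (a non-square stays a non-square, and $\Tr_{\vF_{q^n}/\vF_2}(b_0)=n\,\Tr_{\vF_q/\vF_2}(b_0)$), whence $\varphi$ permutes $\vP^1(\vF_{q^n})$ for all odd $n$ by the unconditional sufficiency direction. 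Equivalently, the coefficient conditions give condition (3) or (4) of Lemma \ref{discriminant}, which imply $(G,N)=(S_3,A_3)$ for all $q$, after which your constant-field argument goes through verbatim. This is in substance what the paper does: its proof of the theorem is a remark that the classification (with its small-$q$ computational check) leaves no degenerate pairs, whereas your draft tries to extract the group theory from Lemma \ref{discriminant} alone, which is only legitimate once $q\geq 13$.
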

This is due to the fact that below the regime in which Theorem \ref{fixed_points} holds, there is no ``degenerate'' case, where the pair (arithmetic Galois group, geometric Galois group) differs from $(S_3,A_3)$. It would be very interesting to see what happens in higher degrees: for example, we already know that ther are no permutation polynomials of degree $4$ when the size of the field is large \cite[Remark 8.1.14]{mullen2013handbook} and the characteristic is odd, whilst there are permutation rational functions (see \cite{guralnick2003rational}). On the other hand, we also know that in higher degrees there are degenerate cases (e.g. $x^4+3x\in \vF_7[x]$).
Using Theorem \ref{fixed_points} one can also characterize permutation rational functions of degree 4 in terms of the factorization of the cubic resolvent. 
For the sake of completeness we include the statement of the result here.

\begin{proposition}\label{degree_4}
Let $q$ be a prime power such that $\gcd(q,6)=1$ and $\varphi=f/g\in \vF_q(x)$ be a separable rational function of degree 4, where $f,g$ are coprime polynomials.
Let $R_3(x)$ be the cubic resolvent of the polynomial $f-gt\in \vF_q(t)[x]$. Suppose that $q>C(4)$, where $C(4)$ is an absolute constant.
The following three conditions are equivalent.
 \begin{enumerate}
  \item The function $\varphi$ is permutation.
  \item The arithmetic Galois group $G$ is the alternating group $A_4$ and the geometric Galois group $N$ is the Klein four-group $V_4$.
  \item There exist $u,v \in \vF_{q^3}$ such that at least one between $u,v$ does not lie in $\vF_q$ and the following factorization holds:
  $$R_3(x)=(x-(ut+v))(x-(u^qt+v^q))(x-(u^{q^2}t+v^{q^2})).$$
 \end{enumerate}
\end{proposition}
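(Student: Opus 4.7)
The plan is to mirror the proof of Lemma~\ref{discriminant} with Theorem~\ref{fixed_points} applied to the quartic $f-gt$. Since $\varphi$ is a separable rational function of degree $4$, the polynomial $f-gt\in\vF_q(t)[x]$ is irreducible (because $[\vF_q(x):\vF_q(t)]=\deg\varphi=4$) and in fact geometrically irreducible (the same degree equality holds over $\overline{\vF}_q(t)$). Therefore $G$ is a transitive subgroup of $S_4$, $N\trianglelefteq G$ is also transitive, and $G/N$ is cyclic. By Theorem~\ref{fixed_points} (applied after establishing an analogue of Lemma~\ref{lemma_control_C} in degree $4$ that fixes the constant $C(4)$), $\varphi$ is permutation if and only if every element of $N\gamma$ fixes exactly one of the four roots of $f-gt$.

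For the equivalence (1)$\Leftrightarrow$(2), I would enumerate the transitive subgroups of $S_4$, namely $C_4,V_4,D_4,A_4,S_4$, together with their transitive normal subgroups with cyclic quotient. The case $N=G$ is excluded since the identity has four fixed points; this kills $G\in\{C_4,V_4\}$, which admit no proper transitive normal subgroup. For $G=D_4$ the only candidates are the cyclic subgroup $\langle r\rangle$ and the Klein four-subgroup generated by the two double transpositions, but in both cases the coset $G\setminus N$ mixes reflections with $2$ fixed points and double transpositions with $0$ fixed points. For $G=S_4$ the only proper subgroup giving a cyclic quotient is $A_4$, and $S_4\setminus A_4$ contains both transpositions ($2$ fixed points) and $4$-cycles ($0$ fixed points). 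The only surviving pair is $(G,N)=(A_4,V_4)$, for which $N\gamma=A_4\setminus V_4$ consists of the eight $3$-cycles, each having exactly one fixed point. The reverse implication (2)$\Rightarrow$(1) follows directly from Theorem~\ref{fixed_points}.

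For (2)$\Leftrightarrow$(3), I would use the classical fact that the Galois group of the cubic resolvent $R_3(x)$ over $\vF_q(t)$ is canonically isomorphic to $G/(G\cap V_4)$, where $V_4$ denotes the normal Klein four-subgroup of $S_4$. Under assumption (2) this quotient is $A_4/V_4\cong C_3$, so $R_3$ is irreducible over $\vF_q(t)$ and its splitting field is $M^{V_4}=M^N=k(t)=\vF_{q^3}(t)$. Hence $R_3$ factors into three distinct linear factors over $\vF_{q^3}(t)$ whose roots form a single Frobenius orbit. After normalizing so that $f-gt$ is monic in $x$, the roots of $R_3$ are integral over $\vF_q[t]$, and since they lie in $\vF_{q^3}(t)$ they in fact belong to $\vF_{q^3}[t]$; a $t$-degree count on the coefficients of $R_3$ then shows that each root has the form $ut+v$ with $u,v\in\vF_{q^3}$. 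The requirement that at least one of $u,v$ is outside $\vF_q$ is equivalent to the three roots being distinct (hence a Frobenius orbit of size $3$). Conversely, if (3) holds then $R_3$ is irreducible over $\vF_q(t)$ with splitting field $\vF_{q^3}(t)$, so $G/(G\cap V_4)\cong C_3$; inspecting the list of transitive subgroups of $S_4$ this forces $G=A_4$, after which $[k:\vF_q]=3$ pins down $N=V_4$.

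The principal obstacle, beyond the combinatorial enumeration above, is establishing a degree-$4$ analogue of Lemma~\ref{lemma_control_C}: one needs explicit bounds for the genus $g_M$ of the Galois closure and for the number of degree-one places of $\vF_q(t)$ ramified in $M$, in order to make the constant $C(4)$ effective. This is considerably heavier than the degree-$3$ case because the Galois closure may now have degree up to $24$, and controlling the contribution of the branch locus via the Hurwitz genus formula is more intricate. A secondary technical point is verifying the integrality and $t$-degree bound that turn the abstract splitting field assertion into the explicit parametrization $(ut+v)$ appearing in (3).
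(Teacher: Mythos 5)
The paper states Proposition \ref{degree_4} without proof (it appears in the final remarks ``for the sake of completeness''), so there is no official argument to compare against; your proposal follows exactly the route the paper gestures at, namely Theorem \ref{fixed_points} plus a resolvent computation mirroring Lemma \ref{discriminant}, and its core is correct. The enumeration of pairs $(G,N)$ with $G\leq S_4$ transitive, $N\trianglelefteq G$ transitive (forced by geometric irreducibility of $f-tg$) and $G/N$ cyclic, filtered by the requirement that every element of $N\gamma$ fix exactly one root, does isolate $(A_4,V_4)$. Two harmless slips: $N\gamma$ is a single coset consisting of four $3$-cycles, not all of $A_4\setminus V_4$; and for $G=D_4$ with $N$ the normal Klein subgroup the offending coset contains $4$-cycles and transpositions rather than double transpositions --- the conclusions are unaffected. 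Also, your ``principal obstacle'' is not actually an obstacle for the statement as given: Theorem \ref{fixed_points} already supplies a constant depending only on the degrees of $f$ and $g$, which in degree $4$ is an absolute constant, so no analogue of Lemma \ref{lemma_control_C} is needed unless one wants $C(4)$ to be numerically explicit.

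The one step that does not go through as written is the normalization in (2)$\Leftrightarrow$(3). You assume $f-gt$ can be made monic in $x$ with coefficients in $\vF_q[t]$, but when $\deg f=\deg g=4$ (which does occur for permutation rational functions of degree $4$, e.g. $\frac{x}{x-c}\circ\varphi_0=\frac{f_0}{f_0-cg_0}$ with $c\neq 0$ and $\varphi_0=f_0/g_0$ a permutation with $\deg f_0=4$) the $x^4$-coefficient of $f-tg$ is $f_4-tg_4$; dividing by it puts the coefficients in $\vF_q(t)$, the roots of the quartic are then not integral over $\vF_q[t]$, and the roots of $R_3$ need not be of the shape $ut+v$ with constant $u,v$. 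You must either treat this case separately or pin down the convention for the cubic resolvent of a non-monic quartic (the paper only defines resolvents of monic polynomials, so the statement implicitly presupposes such a reduction). Apart from this, the argument is sound: the identification of the Galois group of the (separable, since $\gcd(q,6)=1$) resolvent with $G/(G\cap V_4)$, the degree count showing each root of $R_3$ in $\vF_{q^3}[t]$ is linear in $t$, and the translation of (3) into ``$R_3$ irreducible over $\vF_q(t)$ with splitting field $\vF_{q^3}(t)$'' all work, and the sufficiency direction (2)$\Rightarrow$(1) indeed holds for all $q$ by Remark \ref{suff}.
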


\begin{problem}
Compute the number of equivalence classes of degree $4$ permutation rational functions and give explicitely a representative for each class.
\end{problem}
An approach is to prove the analogous of point (3) of the above characterization for $\gcd(q,6)\neq 1$. Then, deal separately with the degenerate cases for $q$ below the absolute constant $C(4)$.
This should in turn lead to solve the following
\begin{problem}
Give exact formulas (depending only on $q$) for the number of permutation rational functions of degree $4$.
\end{problem}

\bibliographystyle{plain}
\bibliography{biblio}

\end{document}